\newcommand {\IZ}{\mathbb{Z}}
\newcommand {\IN}{\mathbb{N}}  
\newcommand {\IR}{\mathbb{R}}   
\newtheorem{stat}{Statement}
\newtheorem{prop}[stat]{Proposition}
\newtheorem{cor}[stat]{Corollary}
\newtheorem{thm}[stat]{Theorem}
\newtheorem{lemma}[stat]{Lemma}
\newtheorem{remark}[]{Remark}
\newtheorem{def1}[]{Definition}
\definecolor{mygreen}{rgb}{0,0.7,0.4}
\definecolor{mypurple}{rgb}{0.7,0,0.3}
\title{ {Contact process under renewals II}}
\author{Luiz Renato Fontes\footnote{Instituto de Matem\'atica e
		Estat\'\i stica. Universidade de S\~ao Paulo, SP, Brazil. E-mail:
		lrfontes@usp.br},
	Thomas S. Mountford\footnote{\'Ecole Polytechnique F\'ed\'erale de Lausanne,
		D\'epartement de Math\'ematiques,
		1015 Lausanne, Switzerland.
		Email: thomas.mountford@epfl.ch},\\ and
	Maria Eul\'alia Vares\footnote{Instituto de Matem\'atica. Universidade Federal
		do Rio de Janeiro, RJ, Brazil. Email: eulalia@im.ufrj.br}}
\begin{document}

\maketitle

\begin{abstract}
We continue the study of renewal contact processes initiated in a companion paper, where we showed that if the tail of the interarrival distribution $\mu$ is heavier than  $t^{-\alpha}$ for some $\alpha <1$ (plus auxiliary regularity conditions) then the critical value vanishes. In this paper we show that if $\mu$ has decreasing hazard rate and tail bounded by $t^{-\alpha}$ with $\alpha >1$, then the critical value is positive in the one-dimensional case. A more robust and much simpler argument shows that the critical value is positive in any dimension whenever the interarrival distribution has a finite second moment.


\bigskip

\noindent \textsc{MSC 2010:} 60K35, 60K05, 82B43. 

\medskip

\noindent \textsc{Keywords:} Contact process, percolation, renewal process.
\end{abstract}
%

\setcounter{equation}{0}
\section{Introduction}
\label{sec;1}

\noindent In this article we continue the study of renewal contact processes begun in the companion paper \cite{FMMV}, but whereas that article gave general conditions for the critical value to equal zero, here we consider conditions entailing the strict positivity of the critical value.

\vspace{0.2cm}

\noindent The renewal contact process is heuristically a model of infection spread, taking values in $\{0, 1\}^{\IZ^{d}}$, where for a configuration $\xi \in \{0, 1\}^{\IZ^{d}}$, the value $\xi(x) = 1$ indicates that individual $x$ is sick and $\xi (x) = 0$ means it is healthy.
\noindent Healthy individuals become sick at a rate equal to some fixed parameter $\lambda$ times the number of infected neighbours. Once sick, the sickness lasts until the next occurrence of a renewal process at the corresponding site; the renewal sequences are independent with the same interarrival distribution $\mu$ for all $x$.  Upon completion of this renewal period the individual reverts to the healthy (but reinfectable) state it had prior to this infection. When $\mu$ is the exponential distribution (typically fixed with rate 1), this is the classical Harris contact process. With general distributions for the interarrival times, we lose the Markov property, but it can still sensibly be viewed as having a percolation structure.  This work, as well as the companion paper \cite{FMMV}, has affinities with \cite{K} and \cite{NV}, which considered contact processes with exponential infections and transmissions but where the rates were randomly assigned.

\vspace{0.2cm}

The setup is the same as in \cite{FMMV}. We have for each ordered pair $(x,y)$ of neighbouring points in $\IZ^d$ (in the usual $\ell_1$-norm) a Poisson process $N_{x,y}$ of rate $\lambda$ (or a process $N_{x,y, \lambda}$ if one is interested in comparing processes with differing infection rates).  We also associate renewal processes $\mathcal{R}_x$
for $x \in \IZ^d$.  All these processes are independent of each other.  Typically but not always (see Section \ref{sec;3}) the $\mathcal{R}_x$ are taken to be i.i.d. renewal processes starting at $0$.  In this latter case we may write
$$
\mathcal{R}_x \quad = \quad \{S_{x,n}: n \geq 1\},
$$
where $S_{x,n} = \sum_{k=1}^n T_{x,k}$ for $ \{ T_{x,k}\colon x \in \IZ^d,k \ \geq 1\}$ i.i.d. random variables with
law the designated $\mu$.

Our process is then constructed via {\it paths}.   A path from $(x,s)$ to $(y,t)$ for $x,y \in \IZ^d$ and $s< t$ is a c\`adl\`ag function $\gamma: [s,t] \ \rightarrow \ \IZ^d$ so that \\
\indent
(i) $\gamma (s) \ = \ x$; \\
\indent
(ii) $\gamma (t) \ = \ y$
; \\
\indent
(iii) $\forall u \in [s,t], \quad u \ \notin  \mathcal{R}_{\gamma (u)}; $ \\
\indent
(iv) $\forall u \in [s,t], \quad $ if  $\gamma (u-) \ne \gamma (u) $, then $u \in  N_{\gamma (u-),\gamma (u)}$.
\vspace{0,2cm}

\noindent Except for Section \ref{sec;2} we will be dealing with $d=1$ in this paper.

\begin{def1}\label{crossing}
Given bounded subsets of $\IZ^d \times \mathbb R$,  $C$ and $D$, we say there is a crossing from
$C$ to $D$ if there exists a path $\gamma: [s,t] \ \rightarrow \ \IZ^d $ so that \\
$(\gamma (s),s ) \ \in \ C$ and $(\gamma (t),t ) \ \in \ D$.
\end{def1}

Given these processes, the renewal contact process (RCP)
starting at $A \subset \IZ^d , \ \xi ^A_t$ is, as usual, defined by
$$
\xi^A_t(y) \ = \ 1 \iff  \  \exists \mbox{ a path from } (x,0)    \mbox{ to } (y,t) \mbox{ for some }
x \ \in \ A.
$$
(If the infection rate is not fixed we may also write it as $\xi^{A, \lambda}_t$.)

For this process we have (taking the usual identification of $\xi : \IZ^d \rightarrow \{0,1\}$ with the subset of points in $\IZ^d$ with $\xi $ value $1$) that
$$
\xi^A_t \ = \ \cup _{x \in A}\xi ^{\{x\}}_t.
$$
That is, like the classical contact process, the process is additive.

Besides losing the Markov property (unless the law $\mu $ is exponential), we no longer typically have the FKG property, though (see Section \ref{sec;3}) there is a larger class of renewal processes for which this holds.

On the other hand, in our model the processes $N_{x,y,\lambda}$ remain independent Poisson processes and we may construct these processes so that
$$
\forall \lambda < \lambda', \ x, y \quad N_{x,y , \lambda}  \ \subset \ N_{x,y,\lambda'}.
$$
This being the case, if we use the same renewal processes to generate the respective contact processes, we have
$$
\forall A \ \subset \ \IZ, \ x , \ \ \lambda < \lambda' , \ \xi ^{A, \lambda}_t (x) \leq \ \xi ^{A, \lambda '}_t (x).
$$
From this we immediately have that $\exists \ \lambda _c \ \in \ [0, \infty ]$ so that \\
$ \lambda < \lambda _c$ implies $P(\xi^{\{0\},\lambda }_t = \emptyset $ for all large $t)$ = 1, and \\
$ \lambda > \lambda _c$ implies $P(\xi_t ^{\{0\}, \lambda} \ne  \emptyset $ for all  $t) \ >  \ 0$.

Equivalently,
$$
\lambda_c=\inf\{\lambda \colon P(\tau^{0}=\infty)>0\},
$$
where $\tau^{0} = \inf \{t \colon \xi^{\{0\}}_t=\emptyset \}$.

By additivity and translation invariance of the process, for any finite $A \ \subset \ \IZ^d$,
$ \lambda < \lambda _c$ implies $P(\xi ^{A, \lambda }_t =\emptyset $ for all large $t) \  = \ 1$ and
$ \lambda > \lambda _c$ implies $P(\xi ^{A, \lambda }_t \ne \emptyset $ for all  $t) \ >  \ 0$.

In general the value $\lambda_c $ need not be  strictly positive and indeed our first paper shows that in a large class of cases $\lambda_c $ is in fact $0$. In that paper we showed that if the law $\mu$ had the property that there exist $\epsilon, C_1 >0$ and $t_0 > 0$ so that $\mu ([t, \infty)) \geq C_1 /t^{1 - \epsilon}$ for all $t \geq t_0$, then (given auxiliary regularity hypotheses) our process had critical value $0$. Here we show that if the tails are suitably bounded then the critical value must be strictly positive when $d=1$.

\noindent We begin with the easiest case of finite second moment:

\begin{thm}
\label{thm1}
\noindent For a renewal contact process on $\IZ^{d}$, if the law $\mu$ satisfies $\int t^{2} \mu(dt) < \infty$ then $\lambda_{c} > 0.$
\end{thm}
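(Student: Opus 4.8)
The plan is to dominate the renewal contact process started at $0$ by an explicit branching process read off the graphical representation, and to show that this branching process is subcritical for small $\lambda$ by a first moment estimate whose only serious input is that finiteness of $\int t^2\,\mu(dt)$ makes a certain inspection‑paradox constant finite. First I would introduce the dominating process: call $(x,s)$ an \emph{infection event} if $s\in N_{y,x}$ for some neighbour $y$ of $x$ which is $\xi^{\{0\}}$‑infected at time $s$, and add the root $(0,0)$; attach to an infection event $(x,s)$ the lifetime interval $[s,\sigma_x(s))$ with $\sigma_x(s):=\inf\{u\in\mathcal R_x:u>s\}$, and declare the children of $(x,s)$ to be the pairs $(y,s')$ with $y$ a neighbour of $x$ and $s'\in N_{x,y}\cap[s,\sigma_x(s))$. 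This produces a branching tree $\eta$ rooted at $(0,0)$; a pair is allowed to appear as several nodes if it is reached along several chains, which only helps an upper bound. Decomposing a path from $(0,0)$ to $(y,t)$ at its successive jumps, and using that between jumps the path avoids the renewal set of the site it sits on, one sees at once that $y$ must be the location of a node of $\eta$ whose lifetime interval contains $t$; hence $\xi^{\{0\}}_t$ is contained in the set of locations of nodes of $\eta$ alive at time $t$. If $\eta$ has a.s.\ finitely many nodes then, each node having a finite lifetime, $\xi^{\{0\}}_t=\emptyset$ for all large $t$ a.s., i.e.\ $\tau^0<\infty$ a.s. Writing $Z_k$ for the number of generation-$k$ nodes of $\eta$, it is therefore enough to prove $\sum_k E[Z_k]<\infty$ for all small $\lambda$.

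Next I would set up the recursion for $E[Z_k]$. The $N_{x,y}$ are rate-$\lambda$ Poisson processes, independent of the family $\{\mathcal R_z\}$; condition on $\mathcal G:=\sigma(\mathcal R_z:z\in\IZ^d)$. Given $\mathcal G$ each $\sigma_x(\cdot)$ is deterministic, and the $N_{x,y}$ are still independent Poisson. Since a chain of $\eta$ is strictly increasing in time, when one exposes the transmission at which a generation-$(k+1)$ node is born it sits in a portion of its Poisson process disjoint from all transmissions used earlier along its chain, so the Campbell/Mecke formula gives
\[
E[Z_{k+1}\mid\mathcal G]=2d\lambda\;E\Big[\textstyle\sum_{(x,s)\in\mathrm{gen}_k}\big(\sigma_x(s)-s\big)\;\Big|\;\mathcal G\Big].
\]
Here $\sigma_x(s)-s$ is the residual renewal time of $\mathcal R_x$ at $s$, and the point is that $s$ is a transmission time, i.e.\ a point of a Poisson process independent of $\mathcal R_x$, so it does not size-bias the renewal interval of $\mathcal R_x$ containing it. Making this rigorous — by a further application of Campbell's formula at the \emph{parent} transmission, together with the renewal (Markov) property of $\mathcal R_x$ at that parent time, which decouples the location of the generation-$k$ nodes from the future of $\mathcal R_x$ — and then averaging over $\mathcal G$ yields
\[
E[Z_{k+1}]\le 2d\,\lambda\,c_\mu\,E[Z_k],\qquad c_\mu:=\sup_{s\ge0}E\big[T_{N(s)+1}\big],
\]
where $T_{N(s)+1}$ is the length of the renewal interval straddling time $s$ in a $\mu$-renewal process (an upper bound for every residual time).

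Then I would check $c_\mu<\infty$, which is exactly where the second moment enters. With $h(y):=\int_y^\infty t\,\mu(dt)$ and $U:=\sum_{n\ge0}P(S_n\in\cdot)$ the renewal measure, one has $E[T_{N(s)+1}]=\int_{[0,s]}h(s-u)\,U(du)$; since $\int_0^\infty h(y)\,dy=\int_0^\infty t^2\,\mu(dt)<\infty$ the nonincreasing function $h$ is directly Riemann integrable, and the elementary (boundedness) half of the renewal theorem gives $\sup_s\int_{[0,s]}h(s-u)\,U(du)<\infty$; indeed $c_\mu$ is, up to bounded factors, the size-biased mean $\int t^2\mu(dt)/\int t\,\mu(dt)$. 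Iterating the recursion, $E[Z_k]\le(2d\,c_\mu\,\lambda)^k$, so $\sum_k E[Z_k]<\infty$ as soon as $\lambda<(2d\,c_\mu)^{-1}$. By the reduction above, $P(\tau^0<\infty)=1$ for every such $\lambda$, hence $\lambda_c\ge(2d\,c_\mu)^{-1}>0$.

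The step I expect to fight with is the one compressed into ``making this rigorous'': the collection of generation-$k$ nodes and the renewal-interval lengths at those nodes are both measurable with respect to $\{\mathcal R_z\}$, so the expectation in the first display does not factor, and the naive bound by the longest gap is worthless. The remedy is to expose the renewals only as far as each transmission time, use the strong renewal property there to replace the random residual life by its conditional mean — a function of the current age — and then use that the transmission times are Poisson points so that this age is averaged against the correct, second-moment-finite, size-biased distribution; the fact that each gap furnishing a generation-$(k+1)$ node is entered at a Poisson time, hence typically short, is what keeps the offspring mean bounded. The remaining ingredients (path-to-tree inclusion, Campbell's formula, $c_\mu<\infty$) are routine.
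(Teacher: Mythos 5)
Your proposal is correct and is essentially the paper's proof: the paper codes the infected renewal intervals into a branching tree, bounds the expected number of offspring arrows per node by $2Cd\lambda$, where $C$ is exactly your $c_\mu$ (their property $(\ast)$, the uniform bound $\sup_{t\ge0}E\,|I_t|\le C$ on the length of the renewal interval straddling a deterministic time $t$, a consequence of the finite second moment), and iterates the first-moment inequality to get $E\bigl[\sum_{\mathrm{gen}\,k}X\bigr]\le(2Cd\lambda)^{k+1}$, hence a.s.\ extinction for $\lambda<1/(2Cd)$. The conditioning subtlety you flag is resolved in the paper by conditioning on the infection time $\sigma$ of a prospective new interval rather than on the full renewal $\sigma$-field $\mathcal G$: since $\sigma$ is a point of a transmission Poisson process independent of the target site's renewal process (or, if that site was visited earlier, of a fresh segment of it by the strong renewal property at the earlier renewal mark), the residual renewal length at $\sigma$ has conditional mean at most $C$ — precisely the fix you sketch in your last paragraph.
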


\vspace{0.3cm}

\noindent This argument uses a branching process argument which is somewhat hidden by the given non Markov renewal structure. We would like to emphasize that this result requires no auxiliary regularity assumptions and is valid in all dimensions. Indeed it is valid in the more general framework of graphs of bounded degree.  Furthermore if we recast the question as a percolation problem
where space time point $(x,t) \ \in \ \IZ^d \times \IR_+ $ is connected to space time $(y,s) $ if there exists $n$ and $\{x_i\}_{i=0} ^ n , \ \{t_i\}_{i=0} ^ n $ so that \\
\indent
(i) $x_0 \ = x, \ t_0 \ = \ t $ and $x_ {n-1} \ = \ x_n \ = y, \ t_n \ = \ s $, \\
\indent
(ii) $\forall \ 0 \leq i \ < \ n-1, \quad |x_i - x_{i+1}| = 1 $ and $\forall \ 1 \leq i \ < \ n-1, \quad t_i \ \in \ N_{x_i, x_{i+1}} \\$
and \\
\indent
(iii)  $\forall \ 0 \leq i \ < \ n, \quad   \mathcal{R}_{x_i}\cap [t_i, t_{i+1}] \ = \ \emptyset$,\\
then the given argument shows that (in the obvious sense) there is no percolation for small $\lambda$.

The argument  leaves a definite gap with the previous results: ignoring technical assumptions, if the tail $\mu ([t, \infty))$ is ``like" $\frac{1}{t^{1 - \epsilon}}$ then $\lambda_{c} = 0$, if it is ``like" $\frac{1}{t^{2 + \epsilon}}$ then $\lambda_{c} > 0$.

\vspace{0.3cm}

\noindent The next theorem is the main result of the paper and makes a step in the direction of filling this gap. It reverts to classical percolation ideas such as RSW crossing estimates and a recursion argument to push these together. It also requires the use of FKG inequalities, which imposes more stringent assumptions on $\mu$:

\vspace{0.3cm}

\noindent {\bf Hypothesis A:}
\noindent $\mu$ has a density $f$ and distribution function $F(t) \ = \ \int_0 ^t f(u)du $ so that the hazard rate $\frac{f(t)}{1 - F(t)}$ is decreasing in $t$.


\begin{thm}
\label{theo2}
\noindent Let $\mu$ satisfy hypothesis A above and $\int t^{\alpha} \mu(dt) < \infty$ for some $\alpha >1$. Then the corresponding renewal contact process on $\IZ$ has strictly positive critical value.
\end{thm}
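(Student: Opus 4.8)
The plan is to run a renormalization/recursion scheme in the spirit of the standard RSW-plus-bootstrap proof of subcriticality for two-dimensional dependent percolation, applied to the space-time percolation structure in $\IZ \times \IR_+$. First I would fix a sequence of length scales, most naturally $L_{k+1} = L_k^{\,b}$ for a suitable constant $b>1$ (say $b$ slightly less than $2$), and define for each $k$ a ``bad event'' at scale $L_k$: roughly, the event that inside a box $B_k$ of spatial width $\sim L_k$ and temporal height $\sim L_k$ there is a crossing connecting the left and right (or top and bottom) faces through the percolation structure of Section \ref{sec;1}. Writing $p_k(\lambda)$ for the probability of such a crossing, the goal is the recursive inequality
\begin{equation}\label{eq:recursion}
p_{k+1}(\lambda) \;\le\; C\, L_k^{\,c}\, \big(p_k(\lambda)\big)^{2},
\end{equation}
for absolute constants $C,c$; once \eqref{eq:recursion} holds, a finite-energy/initial-scale estimate showing $p_{k_0}(\lambda)$ is small for $\lambda$ small enough closes the induction and gives $p_k(\lambda)\to 0$, which one then upgrades to $\lambda_c>0$ by a Borel--Cantelli argument summing the probabilities of crossings of annular regions at all scales.

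To prove \eqref{eq:recursion} I would decompose the larger box $B_{k+1}$ into $O(L_k^{\,c'})$ overlapping sub-boxes of scale $L_k$ and observe that a crossing of $B_{k+1}$ forces crossings of at least two ``well-separated'' sub-boxes (this is the combinatorial heart: a long crossing at scale $L_{k+1}$ must traverse disjoint regions, each of which contains a crossing at scale $L_k$, and one can choose two of them whose defining randomness is essentially disjoint). Here is where the two main technical inputs enter. The first is an RSW-type statement: a crossing in the ``hard'' direction of a rectangle can be bounded in terms of crossings in the ``easy'' direction; this requires the FKG inequality, which is exactly why Hypothesis A is imposed --- Section \ref{sec;3} is invoked to guarantee that the decreasing-hazard-rate renewal processes yield a model with positive correlations. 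The second input controls the long-range dependence coming from the renewal processes: unlike i.i.d. bond percolation, the states of far-apart space-time boxes are not independent, because a single renewal clock at a site may stay silent for a very long time. This is where $\int t^\alpha\,\mu(dt)<\infty$ with $\alpha>1$ is used: the probability that a renewal interval overlapping a given box has length exceeding $L$ decays like $L^{-\alpha}$ (more precisely, a size-biasing estimate gives decay $\sim L^{1-\alpha}$ for the relevant overlap event), so with $\alpha>1$ one can truncate the renewal intervals at scale $L_k$, pay an error term that is summable in $k$, and on the truncated event treat sub-boxes separated by more than $L_k$ in time as independent. The factor $(p_k)^2$ in \eqref{eq:recursion} then comes from independence of the two chosen sub-boxes after truncation, and the polynomial prefactor $L_k^c$ absorbs the number of ways to place them plus the truncation error.

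The step I expect to be the main obstacle is making the ``decoupling after truncation'' rigorous while simultaneously retaining FKG. The difficulty is that truncating a renewal process --- conditioning on the event that no interarrival interval in the relevant time window exceeds $L_k$ --- is a conditioning that can destroy both independence between sites (it does not, since the clocks are independent) and, more delicately, the monotonicity structure needed for FKG and for the RSW argument; one must check that the conditioned renewal process still has a hazard-rate monotonicity or at least still supports the correlation inequality, or else arrange the argument so that FKG is applied before truncation and truncation is applied only to pass to independence, with the two operations carefully ordered. A secondary obstacle is proving the RSW estimate itself in this continuous-time, non-nearest-neighbor-in-time setting: the usual square-root trick and gluing of crossings need an appropriate geometry for the space-time boxes and a symmetry (or near-symmetry) between the space and time directions, which one obtains by choosing the aspect ratio of $B_k$ to compensate for the different roles of $\lambda$ (controlling transmissions) and $\mu$ (controlling lifetimes). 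Once these two are in place, the recursion \eqref{eq:recursion}, the initial-scale estimate (a direct first-moment bound on the number of paths of length $L_{k_0}$, which is small for $\lambda$ small), and the final Borel--Cantelli step to conclude $P(\tau^0=\infty)=0$ are routine.
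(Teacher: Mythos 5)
Your overall strategy --- estimate crossing probabilities of space-time boxes, use FKG (coming from Hypothesis~A) to stitch crossings RSW-style, prove a quadratic-type recursion, bootstrap, and finish with a first-moment initial estimate and a union over scales --- is the same as the paper's. But two of the steps you flag as ``details'' are in fact where the real work lies, and as sketched your plan does not close them.

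First, the aspect ratio of the space-time boxes is not a tuning parameter for the RSW symmetry; it is the only way the renewal error term can be made small. You start with \emph{square} boxes of side $L_k$ and say only later that one ``chooses the aspect ratio to compensate.'' But the bad event you need to exclude --- some site in the box having a renewal interval of duration comparable to the temporal side --- has probability roughly (spatial width)$\times$(temporal side)$^{1-\alpha}$ after a union bound, using the size-biased tail. With square boxes this is $\sim L_k^{2-\alpha}$, which does not decay when $1<\alpha\le 2$, so the recursion cannot even get started for the full range of $\alpha$ covered by the theorem. The paper takes boxes of spatial side $\lfloor 2^{n\beta}\rfloor$ and temporal side $2^n$ with $\beta\in(0,\alpha-1)$, which makes this probability $\sim 2^{n(\beta+1-\alpha)}=2^{-n\epsilon_0}$ with $\epsilon_0>0$; this anisotropy is essential, not cosmetic, and it in turn dictates a geometric scale sequence $L_n=2^n$ rather than the super-exponential $L_{k+1}=L_k^b$ you propose (which would also force $\beta<(\alpha-1)/b$, shrinking the admissible $\beta$ further for no gain).

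Second, you yourself identify the decoupling mechanism as the main obstacle --- truncating renewal intervals and then treating distant sub-boxes as independent, while worrying (correctly) that conditioning on the truncation event may break FKG or the renewal structure --- and you leave this unresolved. The paper avoids truncation entirely. It decomposes the big box into $2^k$ temporal strips, and uses the \emph{regeneration} property of renewal processes: conditioning on the first renewal mark in each timeline of a preceding strip renders the next strip's crossing probability a genuine renewal-contact-process crossing probability with the same interarrival law $\mu$ but shifted starting points --- exactly the quantity $P_{n-k}$ (or $P_{n-k-1}$). The only thing that can go wrong is a site having no renewal mark at all in the preceding strip (a ``$2^{n-k}$-gap''), and that event is controlled separately by the integrability of $\mu$ as above. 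Thus FKG is used only where it is unproblematic (in the diagonal-crossing gluing of Lemma~\ref{build}/Corollary~\ref{FKG2} and in the case analysis reducing a single strip crossing to crossings at scale $n-k$), and decoupling across strips comes for free from renewal regeneration, not from a fragile conditioning. Finally, the pair of disjoint sub-boxes in your sketch is replaced in the paper by $2^{k-1}$ alternating strips, whose conditional crossing probabilities are bounded uniformly by $C(k)(P_{n-k}\vee P_{n-k-1})^{1/10}$ (Proposition~\ref{xiv}); the exponent $2^{k-1}\cdot\frac1{10}\ge 2$, obtained by taking $k$ large once and for all, is what produces the quadratic gain in~(\ref{recur}). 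Without these two ingredients --- the anisotropic aspect ratio tied to $\alpha$, and the regeneration-based decoupling instead of truncation --- your outline has a genuine gap precisely at the steps you labelled as the main obstacles.
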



\noindent {\bf Remark.} The arguments used in the proof of Theorem \ref{theo2} rely on putting together distinct crossing paths, which means that our proof works only for $d=1$.

\medskip

\noindent {\it Outline of the proof.}
Let us at this point give an overall picture of our strategy to prove Theorem~\ref{theo2}. There are three main parts. First, we relate the survival of the infection from the origin up to time $2^n$ to space or time crossings (to be precisely defined in Section 4) of space-time rectangles of spatial and temporal side lengths  $\lfloor 2^{r\beta}\rfloor$ and $2^{r}$, respectively, for suitable $\beta\in(0,1)$ and $r\leq n$. See proof of Theorem~\ref{theo2} (at the beginning of Section~\ref{sec;5}) below. In this part, dimensionality and the FKG inequality play a crucial role.
%


From the first part, it is enough to show that the probability of the space or time crossings mentioned above vanishes as $r\to \infty$. This is the content of Proposition~\ref{propkey}, which is in turn proved via a recursion scheme, in two more parts, as follows. Let us focus on time crossings (the space crossings are treated similarly, if more simply). A time crossing of $[0,\lfloor 2^{n\beta}\rfloor]\times[0,2^{n}]$ implies the time crossings of $2^k$ subrectangles $[0, \lfloor 2^{n\beta}\rfloor]\times[i 2^{n-k}, (i+1)2^{n-k}]$. Here $k$ is a fixed (large) number, independent of $n$. We need to estimate the successive conditional probabilities. Since we have a renewal process on each time-line $\{x\} \times [0, \infty)$, in the event, say $A$, that for each even $i$ and $x\in [0,\lfloor 2^{n\beta}\rfloor]$ there is a renewal mark in the previous time interval $[(i-1)2^{n-k}, i2^{n-k}]$, we get that the conditional probability of a crossing of the  $i$-th subrectangle, given the first renewal marks in the previous subrectangle and all previous history, becomes independent of the history up to the previous even rectangle; a product of the (sups of) crossing probabilities (with the renewal processes starting from different points in the previous subrectangle) over the even subrectangles ensues. The probability of the complement of the above mentioned event $A$ is controlled by the integrability assumption on $\mu$.
Yet, the subrectangles do not have the proper $\lfloor 2^{\ell\beta}\rfloor\times2^\ell$ dimensions.   We relate each of these events to space or time crossings of rectangles of dimensions $\lfloor 2^{\beta (n-\ell)}\rfloor \times2^{n-\ell}$, with $\ell=k$ or $\ell =k+1$. This involves considering a number of cases where such crossings take place, as done in Subsection \ref{ssec;5.1}. In most cases it is just a matter of dealing with a union bound (depending on the location of the crossing). Nevertheless, there is one case where we need again to use Lemma \ref{build}, where FKG is crucial. This is the second part, accomplished in Proposition~\ref{xiv}.

In the concluding argument we use the second part to set up a $k$-step recursion scheme, see~(\ref{recur}),  by the iteration of which, using the decay of the distribution of the inter-arrival times and taking $\lambda$ small, we get the final result.


\section{Finite Second Moment. Proof of Theorem \ref{thm1}}
\label{sec;2}

In this section we assume that $\int t^2 \mu(dt)<\infty$.  The importance of this hypothesis is that it yields the following property for our renewal process $\mathcal{R}$ upon which the proof relies: \\
\indent
There exists $C < \ \infty $ so that uniformly over $t \geq 0  $ the length of the renewal interval $I_t$ which contains the point $t $ satisfies $E(|I_t|) < C$. ($\ast$)

\medskip


A key part of the analysis is to consider ``intervals" infected by the origin $(0,0)$.  More precisely, an ``infected interval" is a subset of $\{x \} \times \IR_+$ of the form   $\{x \} \times J $ for some $x \ \in \ \IZ^d $  and some interval $J \subset \IR$ so that all points
$(x,t)$ in it satisfy $(0,0) \rightarrow (x,t) $ (and no points in it belong to $\mathcal{R}$) and finally it is a maximal subset with this property.  So an infected interval, $I$, will be of the form
$\{x \} \times [s_I,t_I)$ where $s_I$ is its infection time and $t_I $ is the first time point after $s_I$ that belongs to $\mathcal{R}_x$.

We now introduce a ``coding" of infected intervals.  The interval containing $(0,0)$ is coded as $\emptyset $.  Other infected intervals are coded recursively.  If $I \ = \ \{x \} \times [s_I,t_I) $ and
$s_I \ \in \ N_{y,x}$, then for some positive integers $k$ and $i_j $, $1 \leq j \leq k$, we code $I$ by $(i_1, \cdots i_k)$ if $(y,s_I) $ belonged to an interval coded $(i_1, \cdots i_{k-1})$ and if $s_I$ is the $i_k$'th infection point (in chronological order) in the interval coded $(i_1, \cdots i_{k-1})$.  We can think of $k$ as the ``generation" of interval $I$.  We stress that the generation corresponds to the first infection time and not to the ``smallest possible" $k$.
Thus not all arrows result in the creation of an infected interval.  If the r-th arrow of interval  $(i_1, \cdots i_{k-1})$ (here we identify intervals and their codes) infects an already infected site, then the interval $(i_1, \cdots i_{k-1},r)$ is empty or nonexistent (or the arrow is wasted).

Next we define $\IZ_+$ valued random variables $X_{\underline i } $ for $\underline i  \in \cup _{k=0}^ \infty \IN^k$,
with $\IN^0$ denoting the code $\emptyset$, so that
$X_{\underline i } $
equals the number of arrows to neighbouring time lines for interval
$\underline i$.
This will  naturally equal zero if ``interval" $\underline i$
is empty.
We note the branching process property of the $X_{\underline i}$'s:
\begin{equation}\label{branch}
X_{(i_1, \cdots i_{k-1})} \ = \ 0 \quad \Rightarrow \ X_{(i_1, \cdots i_k)} \ = \ 0,\ i_k\geq1.
\end{equation}
It follows that if, for some fixed $k ,  \displaystyle \!\!\!\sum _{(i_1, \cdots i_{k})}\!\!\! X_{(i_1, \cdots i_k) }  =  0 $, then $ \displaystyle\!\!\!\sum _{(i_1, \cdots i_{k'})}\!\!\! X_{(i_1, \cdots i_{k'}) }  =   0 $
for each $k' > k $, and there are only finitely many infected intervals.  This will immediately imply that the contact process dies out.


In fact, we can go beyond (\ref{branch}) to say that
$ \sigma_{(i_1, \cdots i_k)}  = \infty $ implies that $ X_{(i_1, \cdots i_k)} =0$, where $ \sigma_{(i_1, \cdots i_k)} $ is the time of the $i_k $'th arrow of interval $ (i_1, \cdots i_{k-1}) $.

Property ($\ast$) at the beginning of the section implies that
$$
E(X_{(i_1, \cdots i_k)} |  \sigma_{(i_1, \cdots i_k)} < \infty ) \ \leq \ 2Cd \lambda .
$$
From this we inductively get that $E \left( \sum _{(i_1, \cdots i_{k})} X_{(i_1, \cdots i_k)}  \right)  \leq  (2Cd \lambda ) ^{k+1}$.  The condition $\lambda < 1/2Cd$ thus implies that a.s.~the
contact process dies out, concluding the proof of Theorem \ref{thm1}.

\section{Hypothesis A and FKG inequalities}
\label{sec;3}

\noindent This section clarifies the role of Hypothesis A. As stated in Proposition \ref{propfkg} below, it guarantees the FKG property for our RCP, which will then be important for the estimates for crossing probabilities developed in the next section, and which lead to the proof of the main theorem.

We shall deal with a family of independent renewal processes, starting from possibly different initial points. Let $f$ be a probability density on $\mathbb{R}_+$ and $F$ the corresponding distribution function.  We assume that Hypothesis A is satisfied. A realization of the corresponding renewal process starting at any point $t_0 \in \mathbb{R}$ can be easily obtained in terms of a homogeneous Poisson point process $\eta$  on $\mathbb{R} \times \mathbb{R}_+$ of intensity 1.


For this let $h$ be the hazard rate function, defined as $h(t)=f(t)/(1-F(t))$.  We note that under hypothesis A, $F(t) \ \in \ (0,1)$ for all $t > 0 $. To construct the renewal process starting at some point $t_0 \in \mathbb{R}$ we consider all points of $\eta$ in $(t_0,\infty) \times (0,\infty)$ that are under the graph of the function $t \mapsto h(t-t_0)$. Since $\int_0^t h(s)ds =-\log (1-F(t))$, with probability one there are infinitely many such points but only a finite number with first coordinate in $[t_0,t_0+t]$ whenever $F(t)<1$. We can then take the point with the smallest first coordinate, call it $(t_1,u_1)$, i.e.~$u_1 \le h(t_1-t_0)$ and there is no point $(s,u)$ in $\eta$ with $u \le h(s-t_0)$ and $t_0 < s<t_1$. We then have $P(t_1-t_0>s)=e^{-\int_{0}^s h(v)dv}=1-F(s)$ i.e. $t_1-t_0$ has the renewal distribution $F$.  Having obtained $t_1$ we repeat the procedure replacing $t_0$ by $t_1$, since of course the variable $t_1$ is a stopping time for the filtration $(\mathcal{F}_s)_s$ generated by $\eta$ restricted to  $[t_0,\infty)\times (0,\infty)$, i.e.  $\mathcal{F}_s=\sigma(\eta(B)\colon B \subset [t_0, s] \times (0,\infty), B\text{ Borel})$. In this way, and using the independence property of the Poisson variables $\eta(B)$ for disjoint Borel sets $B$, we get $t_1<t_2<\dots $ so that $t_i-t_{i-1}, i \ge 1$ are i.i.d. with density $f$.

%
%

\vspace{0.3cm}
\smallskip

	
	For the FKG property, the important point to realize is that, due to the assumption of decreasing hazard rate, the renewal process is an increasing function of points in the Poisson point process; if a P.p.p realization $\eta'$ differs from $\eta$ by the addition of a point $(s,u)$, then either $u$ is insufficiently small to add $s$ to the renewal set $\mathcal{R}$ and nothing changes, or $s$ is added. In this case, we need to see that the sequence corresponding to $\eta'$ contains that of $\eta$. Let us write $t_1<t_2<t_3<\dots$ for the sequence $\mathcal {R}$ corresponding to $\eta$ and let us assume $t_{j} < s < t_{j+1}$. It is obvious that nothing changes up to $t_j$. When $s$ is added, i.e. we have $u\le h(s-t_j)$, we observe that the next point in $\mathcal{R}'$ will be obtained by checking the $\eta$ points that are under the graph of $v \in (s,\infty) \mapsto h(v-s)$, and taking the one with smallest first coordinate. Since $s \ge t_j$  we have $h(v-s)\ge h(v-t_j)$ for all $v\ge s$, so that
$t_{j+1}$ is one of such points, but there could be one with smaller first coordinate $t'_{j}$. In this case $t'_{j}$ is added to the sequence $\mathcal{R}'$ and we repeat the argument with $t'_j$ instead of $s$. It is easy to see that after a finite number of extra points less than $t_{j+1}$ we shall add $t_{j+1}$ and from that point on, the sequences continue in the same manner.


\vspace{0.3cm}

\noindent  We now consider an event depending on a finite space time rectangle $[0, L] \times [0, T]$  of renewal points $D_x=\{(x,S_{x,n})\}$ and $\lambda$ Poisson processes $\{N_{x,y}\}$ of arrows. We can and will assume that the renewal times $\mathcal{R}_x =\{S_{x,n}\}$ for $x \in [0,L]$, are generated by independent Poisson point processes $\eta_x$ as just discussed.

\vspace{0.3cm}
\noindent {\bf Definition.}  (i) An event $A$ is said to be {\it increasing} with respect to the $\lambda$ Poisson processes $\{N_{x,y}\}$ if given any joint realizations $\omega$ and $\omega'$ of the renewal sequences and $\lambda$ Poisson processes
such that $\omega$ and $\omega'$ have the same renewal points and the $\lambda$  Poisson points in $\omega$ are also present in $\omega'$, then $\omega \in A$ implies $\omega' \in A$.

%

\noindent (ii) An event is {\it decreasing} with respect to the renewal processes if whenever the configurations $\omega$ and $\omega '$ have the same $\lambda$ Poisson process realizations and the renewal processes of $\omega $ dominate those of $\omega '$ (in the sense that if for some $x \in [0,L]$, $(t,u') \in \eta_x (\omega' ) $,  then $(t,u)\in \eta_x (\omega ) $ for some $u \leq u'$), then $\omega \in A$ implies $\omega' \ \in \ A$.

\noindent (iii) We say that an event depending on renewal and $\lambda$ Poisson process points in a finite space time rectangle is {\it increasing} if it is increasing with respect to the $\lambda$ Poisson processes of arrows, and decreasing with the renewal processes.

\vspace{0,3cm}

We then have, by the previous observations (and usual discretization arguments), the following FKG inequality:

\begin{prop}
\label{propfkg}
Assume that the renewal sequence satisfies hypothesis A, and let $A_1, A_2, \dots, A_n $ be increasing events on a finite space time rectangle. Then
\begin{equation*}
P(\cap_{i=1} ^ n A_i ) \ \geq \ \prod _{i=1} ^ n P(A_i).
\end{equation*}
\end{prop}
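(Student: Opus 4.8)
The plan is to deduce the FKG inequality from the classical Harris/FKG inequality for product measures, by exhibiting the RCP on the rectangle as a monotone function of a family of independent "coordinates" carrying a natural partial order under which all the events $A_i$ are increasing. First I would fix the finite space-time rectangle $[0,L]\times[0,T]$ and, after a discretization step, replace each continuous ingredient by a finite collection of independent random variables: the $\lambda$-Poisson arrow processes $\{N_{x,y}\}$ become, on a fine mesh, independent Bernoulli marks on edges $\times$ time-cells, and each renewal line is encoded through the Poisson point process $\eta_x$ of intensity $1$ on $[0,T]\times(0,U)$ for $U$ large enough that $h(T)<U$ (so that the construction of Section~\ref{sec;3} sees all relevant points), discretized to independent Poisson (or, on a finer mesh, Bernoulli) occupation variables over small boxes. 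The product of these independent discretized variables is the probability space to which Harris' inequality applies.

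The key point, already established in the excerpt, is the monotonicity of the renewal construction in the Poisson points: adding a point to $\eta_x$ can only \emph{enlarge} $\mathcal R_x$, hence only shorten the infected periods and only \emph{shrink} the set of admissible paths; conversely adding a $\lambda$-Poisson arrow only \emph{adds} edges to the percolation structure. So I would orient each coordinate so that the RCP's connectivity structure — and therefore every event that is increasing in the sense of part (iii) of the Definition (increasing in the arrows, decreasing in the renewals) — is a coordinatewise nondecreasing function on the product space. Since the discretized Poisson/Bernoulli laws are product measures (each box independent), Harris' inequality gives $P(\cap_i A_i)\ge\prod_i P(A_i)$ for the discretized events, and then a routine limiting argument (refining the mesh and sending $U\to\infty$, using that the events depend on a bounded space-time region and that the continuous construction is the a.s.\ limit of the discrete ones) transfers the inequality to the original $A_i$. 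The two-event case is Harris; the $n$-event case follows by the usual induction, conditioning on $A_n$ and noting that $A_1\cap\cdots\cap A_{n-1}$ is still increasing.

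The main obstacle is the monotonicity/continuity bookkeeping in the renewal encoding rather than any deep new idea: one must check that the map $\eta_x\mapsto\mathcal R_x$ is genuinely monotone \emph{and} sufficiently continuous that the discretization converges (the argument in Section~\ref{sec;3} handles the monotone part, where the decreasing hazard rate $h(v-s)\ge h(v-t_j)$ for $s\ge t_j$ is exactly what is needed), and that truncating $\eta_x$ to $u\le U$ and to a finite time window causes no loss, since on $[0,T]$ only points with $u\le h(T)$ and first coordinate in $[0,T]$ ever matter. A secondary point is that "the same renewal points" in part (i) of the Definition must be read compatibly with the $\eta_x$-encoding used in parts (ii)–(iii); once the coordinates are chosen as the $\eta_x$'s and the arrow-marks, all three notions of "increasing" align into a single coordinatewise order, and the proof reduces to invoking Harris on a product measure.
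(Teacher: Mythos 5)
Your proposal is correct and takes essentially the same route as the paper, whose ``proof'' of Proposition~\ref{propfkg} is in fact just the one-line remark that the result follows ``by the previous observations (and usual discretization arguments)''; you have simply spelled out those observations (monotonicity of $\eta_x\mapsto\mathcal R_x$ under Hypothesis~A, orientation of coordinates so that increasing-in-arrows/decreasing-in-renewals becomes a single coordinatewise order, Harris on the discretized product measure, then a limit). One small inaccuracy worth noting but not fatal: the truncation level $U$ for $\eta_x$ should dominate $\sup_{t>0}h(t)=h(0^+)$ (since after each renewal the relevant hazard curve restarts near its supremum), not $h(T)$; if $h(0^+)=\infty$ one takes $U\to\infty$ along with the mesh refinement, which is exactly the kind of bookkeeping the paper's ``usual discretization arguments'' absorbs.
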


\medskip

\begin{remark}
\label{comparison}
Let $\mathcal R$ and $\tilde{\mathcal{R}}$ be renewal processes starting at $0$ and at some $t_0>0$, respectively. If the interarrival distribution $\mu$ satisfies Hypothesis A, these processes may be coupled in such a way that the set of renewal marks of $\mathcal{R}$ that fall in $[t_0,\infty)$ is contained in the set of renewal marks of $\tilde{\mathcal R}$.
\end{remark}

\section{Applications of FKG inequalities to crossings}
\label{sec;4}
In this section we apply the previous result to a specific kind of crossing event of a rectangle, requiring the existence of a {\it sufficiently inclined diagonal path within a rectangle of certain dimensions} --- see~(\ref{A0}) below. This will be an important ingredient in our strategy of proof of Theorem~\ref{theo2}, as outlined at the end of the Introduction,  and to be undertaken in the following section.
See Lemma~\ref{build}, Corollary~\ref{FKG2} and Remark~\ref{fkg} below.

We are interested in the increasing events defined by crossings as in Definition \ref{crossing}.
%
%
%
%
%

\vspace{0,3cm}

\begin{def1}
  \label{crossing2}
We say there is a crossing from $C \ \subset \ \IZ \times \IR$ to $D\ \subset \ \IZ \times \IR$ {\it in} space-time region $H\ \subset \ \IZ \times \IR$ if there exists a path $\gamma : [s,t] \ \rightarrow \ \IZ $ as in Definition \ref{crossing} such that \\
(i) $( \gamma(s),s)  \ \in \ C$, \\
(i) $( \gamma(t),t)  \ \in \ D$, \\
and \\
(iii) for all $u \in [s,t], \ (\gamma(u),u) \ \in \ H$.
\end{def1}
Obviously the existence of a crossing is an increasing event no matter what choice of $C$, $D$ and $H$ is made. The definition above includes the following special cases:
\begin{def1}
\label{crossing3}
Given a space-time rectangle $H=[a,b] \times [S,T], \quad a,b \in \IZ, \ S,T \in \IR$, we say:

\noindent (I) $H$ has a {\it spatial} crossing if
there exists a crossing in $H$  from $C \ = \ \{a\}\times [S,T]$ to $D  =  \{b\} \times\ [S,T]$.

\noindent (II) $H$ has a {\it temporal} crossing if
there exists a crossing in $H$ from $C \ = \ [a,b] \times \{S\}$ to $D  =   [a,b] \times \{T\}$.
\end{def1}


\noindent {\bf Remark.} A space interval $[a,b]$ should be always understood as $[a,b]_\mathbb Z:=[a,b]\cap\mathbb Z$.

\vspace{0.3cm}

A useful ``building block" in analyzing spatial or temporal crossings of space-time rectangles is the event
\begin{equation}\label{A0}
A_0 \equiv \ A_0(c,\epsilon, L,T) 	
\end{equation}
for $T \in \IR_+, \ L \in \IZ_+ , \ 1/2 <c<1$ and $ \epsilon < cT/8.$
\noindent
$A_0 $ is the event that 
 there is a  crossing (in $[0,L] \times \mathbb{R}_+$) from $\{0\} \times [0, \epsilon]$ to $\{L\} \times [cT, cT + \epsilon]$.
\vspace{0.3cm}

\vspace{0.3cm}

\noindent Let $I_0=[0,\epsilon]$ and recursively define the time intervals $I_1=[cT,cT+\epsilon]$, $I_{2k}=I_{2k-1}-\epsilon$, $I_{2k+1}=I_{2k}+cT$, where $I\pm a=\{x \pm a\colon x \in I\}$. Define
$A_{1}$ to be the event that there is a crossing (within $[0, L]\times \IR_{+}$) from  $\{L\}\times I_{2}$ to  $\{0\} \times I_{3}$, and $A_{2}$ to be the event that there is a crossing (within $[0, L]\times \IR_{+}$) from  $\{0\}\times I_{4}$ to  $\{L\} \times I_{5}$, $A_{3}$ to be the event that there is a crossing (within $[0, L]\times \IR_{+}$) from  $\{L\}\times I_{6}$ to  $\{0\} \times I_{7}$, and so on.

%
%

\begin{lemma} \label{build}
$P (A_{0} \cap A_{1} \cdots \cap A_{m}) \geq  \prod_{i=0}^m P(A_i) \ge P (A_{0})^{m+1}$.
\end{lemma}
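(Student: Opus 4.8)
The plan is to apply the FKG inequality from Proposition~\ref{propfkg}, so the key point is to verify that all of the events $A_0, A_1, \dots, A_m$ can be realized as increasing events on one common finite space-time rectangle. First I would observe that each $A_i$ is defined as the existence of a crossing (in the sense of Definition~\ref{crossing}) within $[0,L]\times\mathbb R_+$ between two spatial sides $\{0\}\times I_{2j+1}$ and $\{L\}\times I_{2j}$ (or vice versa), and by Definition~\ref{crossing2}/\ref{crossing3} and the comment immediately following them, the existence of a crossing is an increasing event — it is increasing with respect to the arrow Poisson processes $\{N_{x,y}\}$ (adding arrows can only create more paths) and decreasing with respect to the renewal processes (adding renewal marks, i.e.\ adding points to the $\eta_x$ below the hazard-rate graphs, can only kill paths). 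So each $A_i$ is increasing in the sense of part (iii) of the Definition preceding Proposition~\ref{propfkg}.

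The second step is to confine everything to a fixed finite rectangle. The intervals $I_j$ are nested in time only up to the bound $j\le m$; more precisely $I_0=[0,\epsilon]$, and the recursion $I_{2k}=I_{2k-1}-\epsilon$, $I_{2k+1}=I_{2k}+cT$ moves upward by $cT$ then down by $\epsilon$ at each step, so all the intervals $I_0,\dots,I_{2m+1}$ — and hence all space-time points relevant to $A_0,\dots,A_m$ — lie inside $[0,L]\times[0,\, (m+1)cT + \epsilon]$, which is a finite space-time rectangle of the type to which Proposition~\ref{propfkg} applies (with $L$ and this large but finite time horizon). Strictly, crossings are defined ``in $[0,L]\times\mathbb R_+$'', but since each $A_i$ only concerns a path whose endpoints lie in $I_{2j}, I_{2j+1}$ and which stays in $[0,L]$, one can note that the event is unchanged if one restricts attention to the renewal and arrow configuration inside this fixed rectangle — this is exactly the ``usual discretization arguments'' remark attached to Proposition~\ref{propfkg}. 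Then Proposition~\ref{propfkg} gives directly
\[
P\big(A_0 \cap A_1 \cap \cdots \cap A_m\big) \;\ge\; \prod_{i=0}^m P(A_i).
\]

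For the final inequality $\prod_{i=0}^m P(A_i) \ge P(A_0)^{m+1}$, the observation is that each $A_i$ is, up to a rigid time translation and possibly a spatial reflection $x\mapsto L-x$, the same event as $A_0$. Indeed $A_1$ is a crossing from $\{L\}\times I_2$ to $\{0\}\times I_3$; reflecting space and translating time by the appropriate amount carries this to a crossing from $\{0\}\times I_0'$ to $\{L\}\times I_1'$ where $I_0', I_1'$ are intervals of lengths $\epsilon$ separated by the right gap $cT$, i.e.\ to $A_0$. Since the renewal law $\mu$ is the same at every site and the arrow Poisson processes are homogeneous and (by the reversibility of the construction in space) invariant under the reflection of the finite strip, each $P(A_i)=P(A_0)$, and the bound follows; one could alternatively just say $P(A_i)\ge P(A_0)$ using only time-translation invariance and an obvious monotonicity, which suffices.

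The only genuine subtlety — and the step I would be most careful about — is the reduction to a \emph{finite} rectangle: Proposition~\ref{propfkg} is stated for increasing events on a finite space-time rectangle, whereas crossings are nominally defined over $[0,L]\times\mathbb R_+$. One must argue that, on the event $\bigcap_i A_i$ (and each $A_i$ individually), the relevant paths are confined to $[0,L]\times[0,(m+1)cT+\epsilon]$, so that each $A_i$ coincides with an event measurable with respect to (and increasing in) the configuration in that bounded region, after which the discretization/limiting argument underlying Proposition~\ref{propfkg} applies verbatim. Everything else is a direct invocation of Proposition~\ref{propfkg} plus translation invariance.
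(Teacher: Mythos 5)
Your argument for the first inequality matches the paper's proof: invoke Proposition~\ref{propfkg} after checking that each $A_i$ is an increasing event confined (in the relevant sense) to a fixed finite rectangle. That part is fine, and your care about the finite-rectangle reduction is sensible though the paper takes it for granted.

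The second inequality is where you go wrong. You assert $P(A_i)=P(A_0)$ ``up to a rigid time translation and possibly a spatial reflection.'' The spatial reflection is harmless, but the system is \emph{not} time-translation invariant: the renewal processes $\mathcal{R}_x$ are all started at time $0$, so the law of the renewal configuration on $\{x\}\times[t_0,t_0+s]$ for $t_0>0$ is genuinely different from its law on $\{x\}\times[0,s]$. Consequently there is no measure-preserving time shift carrying $A_i$ onto $A_0$, and the claimed equality is false in general. The correct statement is only the inequality $P(A_i)\ge P(A_0)$, and its proof is not ``obvious monotonicity'': it relies on Remark~\ref{comparison}, which in turn rests on Hypothesis~A (decreasing hazard rate). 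That remark couples the original renewal process $\mathcal{R}$ (started at $0$) with a fresh one $\tilde{\mathcal{R}}$ started at $t_0>0$ so that $\mathcal{R}\cap[t_0,\infty)\subset\tilde{\mathcal{R}}$; since crossings are decreasing in the renewal marks, having \emph{fewer} marks on the time window of $A_i$ makes the crossing at least as likely as in the fresh system, which by spatial reflection has probability exactly $P(A_0)$. Your parenthetical alternative (``time-translation invariance and an obvious monotonicity'') gestures in this direction but does not identify the domination, does not note that it is only a one-sided bound, and does not flag that it fails without Hypothesis~A — so as written there is a genuine gap precisely at the step the lemma needs.
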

\begin{proof}
The first inequality follows from Proposition \ref{propfkg} since the events in question are increasing.  For the second inequality, observe that  for all $i, \ P(A_i) \geq P(A_0) $ by our choice of FKG renewal distribution, as follows from Remark \ref{comparison}.
\end{proof}
%

%

\begin{cor} \label{FKG2}
	\noindent Let $m$ be a positive integer. The probability of a temporal crossing of $[0, L] \times [\epsilon, \epsilon + mT]$ is at least $P(A_{0})^{\frac{8}{3}m +2}$.
\end{cor}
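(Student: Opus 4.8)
\noindent\textit{Proof plan.} The plan is to realize the temporal crossing from the FKG-friendly event $\bigcap_{i=0}^{m'}A_i$ for a suitable $m'$, and then to apply Lemma~\ref{build}. I would fix $m'$ to be the least integer with $(m'+1)(cT-\epsilon)\ge mT$, and argue that, off a null set, $\bigcap_{i=0}^{m'}A_i$ is contained in the event that $[0,L]\times[\epsilon,\epsilon+mT]$ has a temporal crossing. Granting that, Lemma~\ref{build} gives probability at least $P(A_0)^{m'+1}$; since $1/2<c$ and $\epsilon<cT/8<T/8$ give $cT-\epsilon>3T/8$, one has $m'+1=\lceil mT/(cT-\epsilon)\rceil\le\lceil 8m/3\rceil\le\frac{8}{3}m+2$, and as $P(A_0)\le1$ this yields $P(A_0)^{m'+1}\ge P(A_0)^{\frac{8}{3}m+2}$, which is the claim.

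The substance is the containment. On $\bigcap_{i=0}^{m'}A_i$ pick paths $\gamma_0,\dots,\gamma_{m'}$ realizing the successive crossings introduced before Lemma~\ref{build}; the decisive structural facts are that $\gamma_j$ ends at the site ($0$ or $L$) at which $\gamma_{j+1}$ starts, and that the recursion for the $I$'s makes $I_{2j+1}$ and $I_{2j+2}$ abut at $(j+1)cT-j\epsilon$, so that the time domains $[a_j,b_j]$ of the $\gamma_j$'s satisfy $b_{j-1}\le a_{j+1}\le b_j\le b_{j+1}$ (every interval comparison needed reduces to $\epsilon<cT/8$). I would then splice the $\gamma_j$'s into a single path: on the nonempty overlap $[a_{j+1},b_j]$ both are defined, with $\gamma_{j+1}$ pinned to the junction site $s_j\in\{0,L\}$ at the left endpoint and $\gamma_j$ pinned to $s_j$ at the right endpoint while both take values in $[0,L]$; off a null set at most one of the two paths jumps at a time (distinct arrow processes have disjoint supports a.s., and two paths using a common arrow simultaneously coincide there), so the integer difference $\gamma_j-\gamma_{j+1}$ moves by unit steps and changes sign over the interval, and a discrete intermediate-value argument produces a time $u^\ast_{j+1}\in[a_{j+1},b_j]$ at which $\gamma_j$ and $\gamma_{j+1}$ sit at the same site. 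Because $u^\ast_j\le b_{j-1}\le a_{j+1}\le u^\ast_{j+1}$ the splice times are nondecreasing, so following $\gamma_0$ up to $u^\ast_1$, then each $\gamma_j$ between $u^\ast_j$ and $u^\ast_{j+1}$, then $\gamma_{m'}$ up to $b_{m'}$, gives a genuine path $\Gamma$ with spatial values in $[0,L]$, starting at time $a_0\le\epsilon$ and ending at time $b_{m'}\ge(m'+1)cT-m'\epsilon=\epsilon+(m'+1)(cT-\epsilon)\ge\epsilon+mT$; restricting $\Gamma$ to $[\epsilon,\epsilon+mT]$ is the desired temporal crossing.

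The step I expect to be the main obstacle — and the one that confines everything here to $d=1$ — is exactly this splicing: one must know that two crossing paths whose boundary data agree at a common site and whose time windows overlap necessarily meet in space-time. In one dimension this is the discrete intermediate-value phenomenon above; the analogous statement fails in higher dimension, which is why Corollary~\ref{FKG2}, and the recursion resting on it, is one-dimensional. A minor side point is the a.s.\ treatment of coincident jumps, dealt with by the parenthetical remark above.
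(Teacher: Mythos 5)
Your proof is correct and takes the same route as the paper's: you realize the temporal crossing from $\bigcap_{i=0}^{m'}A_i$, count how many $A_i$ are needed (your $m'$ and the paper's $n$ coincide, both governed by $(m'+1)(cT-\epsilon)\ge mT$), and apply Lemma~\ref{build}, with the arithmetic giving the same exponent $\frac83 m+2$. The paper merely asserts that $\bigcap_{i\le n}A_i$ entails a temporal crossing ``from the definitions''; you supply the splicing/discrete-intermediate-value detail for that containment, which is correct and is indeed where one-dimensionality enters.
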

\begin{proof}
The rectangle in $A_i$, $i\geq0$, starts at time $i(cT-\epsilon)$ and has length $cT+\epsilon$ (in the temporal direction). It follows from the definitions that the event in the statement occurs in $A_{0}\cap \cdots\cap A_{n}$ provided $n(cT-\epsilon)+ cT \ge \epsilon + mT$. Therefore it suffices $n\geq\frac mx-1$, where $x=c-\epsilon/T$. From our hypotheses, we have that $x\in[\frac7{16},1)$, so the least integer $n$ satisfying the above condition is bounded above by $\lceil\frac{16}7m\rceil\leq\frac{8}{3}m +1$, and the result follows from Lemma~\ref{build}.
\end{proof}

\begin{remark}\label{fkg}
Given the FKG property of our renewal processes, the above bound holds if for $v \geq 0$ the event that there is a temporal crossing of $[0, L] \times [\epsilon, \epsilon + mT] $
is replaced by the event that  there is a temporal  crossing of $[0, L] \times [v+ \epsilon, v+ \epsilon + mT]  $
 in  space-time rectangle $[0, L]\times [v, \infty )$ and event $A_0 $ is replaced by the event that
there is a  crossing (in $[0,L] \times [v,\infty)$) from $\{0\} \times [v, v + \epsilon]$ to $\{L\} \times [v+ cT, v+ cT + \epsilon]$.


\end{remark}

\begin{remark}\label{value}
The value of this result is that if $c$ is not too small, then a reasonable probability for a  spatial crossing (in $[0, L]\times \IR_{+}$) from $\{0\} \times [0, \epsilon]$ to $ \{L\} \times [cT, cT + \epsilon]$
yields a not too small probability for a temporal crossing of rectangle $ [0, L]\times [0, mT]$.  Furthermore it is easy to see that if there is a reasonable probability for a
spatial crossing of $ [0, L]\times [0, T]$, then either there is a reasonable probability of a spatial crossing for which
the  time difference between its initial and final points is small (compared to $T$) or  a not too small probability of a temporal crossing of $[0, L]\times [0, mT]$ is entailed. This will be developed in the next section. 
\end{remark}


\section{Crossings of rectangles}
\label{sec;5}

In this section we prove Theorem~\ref{theo2}. We start the argument, following the first step of the strategy outlined at the end of the Introduction, by reducing survival to crossings of space-time rectangles.

\medskip

\noindent {\bf Notation.} For $x >0$, write $\lfloor x\rfloor =\max\{n \in \mathbb{Z}\colon n \le x\}$ and $\lceil x\rceil=\min \{ n \in \mathbb{Z} \colon n>x\}$.



\begin{def1}\label{pr}
Let $\beta\in (0,1)$. Here and in the following $P_r$ denotes the supremum over the  probabilities for the space-time rectangle $ [0, \lfloor 2^{r\beta}\rfloor ]\times [0,2^r ]$ of either a spatial or a temporal crossing. The supremum is taken over all product renewal probability measures with interarrival distribution $\mu$, for the death points starting at time points strictly less than $0$.  (Note the starting points (or times) need not be the same.)
\end{def1}



Due to the  FKG property,  $P_r$ is indeed the limit of the probability of crossings (either spatial or temporal) for $ [0, \lfloor 2^{r\beta}\rfloor ]\times [T,T+2^r ]$,  as $T$ tends to infinity.

We now state the key result for this section.   

\begin{prop} \label{propkey}
Assume $\beta\in (0,\alpha-1)$, with $\alpha$ as in the statement of Theorem~\ref{theo2}. There exists $\lambda_0  >   0$ so that for $0 \leq  \lambda  <  \lambda_0$ \\
$$
P_r \ \stackrel {r \rightarrow \infty }{\longrightarrow } \ 0.
$$
\end{prop}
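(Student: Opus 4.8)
The plan is to set up a $k$-step recursion relating $P_{n}$ to $P_{n-k}$ and $P_{n-k-1}$, following the outline in the Introduction, and then iterate it. First I would fix a large integer $k$ (to be chosen at the end, depending on $\alpha$ and $\beta$ but not on $n$) and decompose a temporal or spatial crossing of the rectangle $[0,\lfloor 2^{n\beta}\rfloor]\times[0,2^{n}]$ of dimensions $\lfloor 2^{n\beta}\rfloor\times 2^{n}$. Consider a temporal crossing: it forces temporal crossings of each of the $2^{k}$ horizontal sub-slabs $[0,\lfloor 2^{n\beta}\rfloor]\times[i2^{n-k},(i+1)2^{n-k}]$, $0\le i<2^{k}$. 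Condition successively: on the event $A$ that for every odd-index slab and every column $x\in[0,\lfloor 2^{n\beta}\rfloor]$ there is a renewal mark of $\mathcal{R}_{x}$ inside that slab, the crossing of each even-index slab, given the first renewal marks above the preceding odd slab together with all earlier history, depends only on fresh randomness, with the renewal processes restarted from points strictly below the bottom of the slab; hence by Definition~\ref{pr} (and the FKG remark that $P_{r}$ is the $T\to\infty$ limit with shifted-down starting times) each such conditional crossing probability is at most the $P$-value for a $\lfloor 2^{n\beta}\rfloor\times2^{n-k}$ rectangle. The probability of $A^{c}$ is bounded, by a union bound over the at most $2^{k}\lfloor 2^{n\beta}\rfloor$ (column, slab) pairs, by $2^{k}\lfloor 2^{n\beta}\rfloor\,\mu((2^{n-k},\infty))$, which by the moment assumption $\int t^{\alpha}\mu(dt)<\infty$ is at most $C\,2^{k}2^{n\beta}2^{-\alpha(n-k)}=C'2^{-(\alpha-1-\beta)n}$ up to constants, and this is summable in $n$ precisely because $\beta<\alpha-1$. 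So roughly $P_{n}\lesssim \big(\text{[crossing prob.\ of a }\lfloor 2^{n\beta}\rfloor\times2^{n-k}\text{ rectangle]}\big)^{2^{k-1}} + (\text{summable error})$.

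The second part is to bound the crossing probability of the ``wrong-shape'' rectangle $[0,\lfloor 2^{n\beta}\rfloor]\times[0,2^{n-k}]$ (whose temporal side is $2^{n-k}$ but whose spatial side $\lfloor 2^{n\beta}\rfloor$ is that of the bigger rectangle, not $\lfloor 2^{(n-k)\beta}\rfloor$) by $P_{n-k}$ or $P_{n-k-1}$. Here I would use the case analysis of Subsection~\ref{ssec;5.1} / Proposition~\ref{xiv}: a crossing of such a thin, wide rectangle must, depending on the spatial and temporal displacement of its endpoints, contain either a spatial or a temporal crossing of a properly-shaped $\lfloor 2^{\beta(n-\ell)}\rfloor\times2^{n-\ell}$ rectangle for $\ell\in\{k,k+1\}$; most cases are handled by a union bound over $O(2^{\beta n}/2^{\beta(n-k)})$-many translates, and the one delicate case — where the crossing is very inclined and none of the obvious sub-rectangles is crossed directly — is treated using Lemma~\ref{build}/Corollary~\ref{FKG2}, i.e.\ a reasonable spatial crossing probability for a long thin rectangle is promoted, via the FKG chaining of the $A_{i}$'s, into a temporal crossing probability of a properly-shaped rectangle. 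The net effect is an inequality of the form $[\text{crossing prob.\ of }\lfloor 2^{n\beta}\rfloor\times 2^{n-k}] \le C\,2^{k\beta}\big(P_{n-k}+P_{n-k-1}\big)$, with $C,k$ absolute.

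Combining the two parts gives the recursion~(\ref{recur}), schematically
$$
P_{n}\ \le\ C\Big(2^{k\beta}\,(P_{n-k}+P_{n-k-1})\Big)^{2^{k-1}}\ +\ C'\lambda\,2^{-(\alpha-1-\beta)n},
$$
where I should track carefully that the power $2^{k-1}$ (the number of even sub-slabs we gain independence across) is what makes the scheme contractive: choose first $k$ large enough that $2^{k-1}>1$ overcomes the polynomial prefactors $C2^{k\beta}$, then choose $\lambda_{0}$ small enough that $P_{0}$ (and the first few $P_{r}$, $r<k$, which are crossing probabilities of bounded-size rectangles and tend to $0$ as $\lambda\to 0$ since with $\lambda=0$ there are no arrows at all and no nontrivial crossing) is below the basin of attraction of the stable fixed point $0$. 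A standard induction on $n$ along residues mod $k$, absorbing the summable error term, then yields $P_{n}\to 0$.

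The main obstacle, I expect, is the second part: making the reduction of the mis-shapen rectangle crossing to genuine $P_{r}$-type quantities fully rigorous, in particular the inclined-path case where one cannot simply find a sub-rectangle that is directly crossed and must instead invoke the FKG building-block Lemma~\ref{build} to manufacture a temporal crossing of the correct aspect ratio; getting the geometry, the choice of $c$ and $\epsilon$ in $A_{0}(c,\epsilon,L,T)$, and the bookkeeping of constants to line up so that the exponent $2^{k-1}$ genuinely dominates is the crux. A secondary but real technical point is justifying the conditional-independence/domination step cleanly — that conditioning on the first renewal marks above each odd slab genuinely decouples the even-slab crossings and that Remark~\ref{comparison} lets us bound each factor by a $P_{r}$ with shifted starting times — which is exactly where Hypothesis~A (hence Proposition~\ref{propfkg} and Remark~\ref{comparison}) is indispensable.
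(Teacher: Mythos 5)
Your proposal follows essentially the same architecture as the paper's proof: decompose the $\lfloor 2^{n\beta}\rfloor\times 2^{n}$ rectangle into $2^{k}$ temporal slabs, condition on the absence of long renewal gaps to decouple the even-indexed slabs, reduce the crossing of each mis-shapen $\lfloor 2^{n\beta}\rfloor\times 2^{n-k}$ slab to crossings of correctly scaled $\lfloor 2^{\beta(n-\ell)}\rfloor\times 2^{n-\ell}$ rectangles via a case analysis with the FKG chaining lemma in the inclined case, then iterate the resulting recursion with $\lambda$ small to seed the induction. That is precisely the paper's route. There are, however, two concrete inaccuracies in your bookkeeping worth flagging, both of which the paper handles carefully and both of which change the quantitative form of your recursion.

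First, the per-slab bound. You write
$$\text{(crossing prob.\ of }\lfloor 2^{n\beta}\rfloor\times 2^{n-k}\text{ rectangle)}\ \le\ C\,2^{k\beta}\bigl(P_{n-k}+P_{n-k-1}\bigr),$$
i.e.\ with exponent $1$, and then declare that any $k$ with $2^{k-1}>1$ beats the prefactor. But the very mechanism you invoke for the delicate inclined case --- the chaining of the $A_i$'s in Lemma~\ref{build} / Corollary~\ref{FKG2} / Proposition~\ref{p11} --- does not give a linear bound: chaining $O(m)$ diagonal blocks bounds the \emph{chain} probability by $P(A_0)^{cm}$, hence bounds the probability of the diagonal block from \emph{above} by the $\tfrac1{10}$-th power of a temporal-crossing probability. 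The honest output of the case analysis (Proposition~\ref{xiv}) is
$$\text{(crossing prob.\ of }\lfloor 2^{n\beta}\rfloor\times 2^{n-k}\text{ rectangle)}\ \le\ C(k)\,\bigl(P_{n-k}\vee P_{n-k-1}\bigr)^{1/10},$$
which is \emph{weaker} (larger) than your claimed bound since $P^{1/10}\ge P$. Feeding this into the product over the $2^{k-1}$ even slabs yields exponent $2^{k-1}/10$, so you must take $k$ large enough that $2^{k-1}>20$ (not merely $>1$) to get the square $(P_{n-k}\vee P_{n-k-1})^{2}$ that makes the iteration close. Your stated power-$1$ bound is not provable by the FKG machinery you cite and would overstate the contractivity; the fix is just to carry the $\tfrac1{10}$ through.

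Second, the error term. You bound the per-(column, odd-slab) probability of no renewal mark by $\mu((2^{n-k},\infty))\lesssim 2^{-\alpha(n-k)}$, which after the union bound would give a decay $2^{-(\alpha-\beta)n}$ rather than $2^{-(\alpha-1-\beta)n}$. The relevant event is that the renewal interval straddling the bottom of the slab extends across the full $2^{n-k}$ window; by the inspection paradox (size-biasing of the straddling interval), with $\int t^{\alpha}\mu(dt)<\infty$ this probability decays like $(2^{n-k})^{-(\alpha-1)}$, not $(2^{n-k})^{-\alpha}$. That is exactly why the proposition needs $\beta<\alpha-1$ and not merely $\beta<\alpha$, and why the paper defines the stopping time $T_n$ in terms of the first renewal after the start of each even slab. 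Your final exponent $\alpha-1-\beta$ is correct, but the derivation you give would yield the wrong (too strong) exponent and obscure the role of the hypothesis $\beta<\alpha-1$. Neither issue is structural --- both are fixable within your scheme --- but as written the recursion you state is not the one the estimates actually deliver.
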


Given this result and Lemma~\ref{build}, we quickly achieve our desired result:

\medskip

We now give the proof of Theorem~\ref{theo2}.\\

\begin{proof}
It is enough to show that $P ( \tau^0 = \infty )  =  0$ for $\lambda <  \lambda_0 $, the claimed constant of Proposition \ref{propkey}.
Equivalently we must show that
$P ( \tau^0 >  2 ^ r )$
tends to zero as $r $ tends to infinity.

Consider the event that $\tau^0 >  2^r$.  This is contained in the union of three events defined by
the Harris system on space-time rectangle $R =[-\lfloor 2^{r\beta} / 2\rfloor, \lfloor 2^{r\beta }/ 2\rfloor]\times [0, 2^r ]$: \\
(I) there exists a path from $(0,0)$ to $ \mathbb Z\times \{ 2^r \}$ in $R$; \\
(II) there exists a path from $(0,0)$ to $\{ \lfloor 2^{r\beta}/ 2\rfloor \} \times [0, 2^r ]$; \\
(III) there exists a path from $(0,0)$ to $\{ -\lfloor 2^{r\beta} / 2 \rfloor \} \times [0, 2^r ] $.\\

The first possibility (I) is simply a subset of the event that the space-time rectangle $R$ has a temporal crossing and so (given translation invariance of the system) has probability bounded by $P_r$ which, by Proposition \ref{propkey}, tends to zero as $r$ tends to infinity.  So it remains to find an upper bound for possibilities (II) and (III) which tends to zero as $r$ tends to infinity.  By symmetry we need only upper bound the
probability of the event (II).

We fix a large integer $K$ which will depend upon $\beta $ but not upon $r$, and for $1\leq i\leq j \leq K $ we define $A(i,j)$
as the event that there is a crossing from
$\{ 0 \} \times [\frac{i-1}{K} 2^r,  \frac{i}{K} 2^r]$ to $ \{ \lfloor2^{r\beta } / 2 \rfloor \}\times [\frac{j-1}{K} 2^r,  \frac{j}{K} 2^r] $ in the rectangle $R^\prime  =[0, \lfloor 2^{r\beta }/ 2\rfloor]\times [0, 2^r ]$.

Obviously the event $\cup_{i,j} A(i,j)$ contains (II).

We fix $1 \leq i \leq K $ and then split $i \leq j \leq K $ into
$B_i \ = \ \{ j: \frac{j-i+1}{K}  \ \leq \ 2^{-\lceil 1/ \beta\rceil}\}$
and $D_i = [i,  K] \backslash B_i$.

We have that the event $\cup_{j \in B_i}A(i,j)$ is contained in the event that there is a spatial crossing for the space-time rectangle $[0,\lfloor 2^{(r-\lceil 1/ \beta\rceil) \beta}\rfloor ] \times [\frac{i-1}{K} 2^r,  \frac{i-1}{K} 2^r+ 2^{r-\lceil 1/ \beta\rceil }] $ and so its probability is bounded by $P_{r - \lceil 1/ \beta\rceil}$.

For $j  \in  D_i $ we have (assuming that $K$ is sufficiently large) that  $ \epsilon \ = \ 2^r / K ,  cT = (j-i) 2^r / K $ and $ c =  2/3$ satisfy
$ 1/2 < c < 1 $ and $ \epsilon < cT/8 $ . So by Lemma~\ref{build} (and Remark \ref{fkg}) we have again assuming $K$ was fixed large)
$$
P(A(i,j)) \ \leq \ (P_r)^{1/( 2^{\lceil 1/\beta\rceil+1})}.
$$
Thus we obtain the bound for the probability of event (II)
$$
K^2  (P_r) ^{1/ (2^{\lceil 1/ \beta\rceil+1})} \ + \ K P_{r - \lceil 1/ \beta\rceil}
$$
and, again by Proposition \ref{propkey}, we are done.
\end{proof}

\subsection{Proof of Proposition~\ref{propkey} --- Generic crossing events}
\label{ssec;5.1}




We start by introducing some generic crossing events which come up in different kinds of spatial or temporal crossings entering our analysis of $P_r$, as already anticipated, and deriving probability bounds for each of them.

\vspace{0.2cm}

\noindent {\bf Notation.} If $X=(X(u)\colon u \in [s,t])$  is a path, we write
\begin{equation}
\label{var}
v(X):=\max\{X(u)\colon u\in [s,t]\}-\min\{X(u)\colon u\in [s,t]\},
\end{equation}
and call $v(X)$ {\it variation} of $X$.

\noindent {\bf Definition.}
	For $D = [a, b]\times [s', t']$ a space-time rectangle, $c \in (0,1)$ a constant, and $r$ an integer, let $A(D, c, r)$ be the event that either there exist times $s_{1}, s_{2}$ with $s_{2} - s_{1} >  2^{r}/c$ and a path $X=(X(s)\colon s_{1} \leq s \leq s_{2})$ within $D$ such that  $v(X) < \lfloor c 2^{r \beta}\rfloor$, or there exist times $s_{1} \le  s_{2}$ with $s_{2} - s_{1} < c2^r $ and a path $X=(X(s)\colon s_{1} \leq s \leq s_{2})$ within $D$ so that $v(X) > \lceil\frac{2^{r \beta}}{c}\rceil$.

We then have

\begin{prop}\label{adcr}
For $D$, $c$ and $r$ as above with $2^{r \beta } (1-c) \ \geq \ 2 $,
$$ P(A (D, c, r)) \leq C(c) \left( \frac{b-a}{2^{r\beta}} \vee 1 \right) \left( \frac{t' -s'}{2^{r}}  \vee 1 \right) P_{r},$$
where $C(c)$ is a finite function.
\end{prop}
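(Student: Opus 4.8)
The plan is to reduce the event $A(D,c,r)$ to a union of crossing events of $\lfloor 2^{r\beta}\rfloor\times 2^r$ rectangles (in either the spatial or temporal orientation), tile $D$ by suitably overlapping such rectangles, and conclude by a union bound together with the definition of $P_r$. Concretely, there are two constituent cases in the definition of $A(D,c,r)$: a path of small variation ($v(X)<\lfloor c2^{r\beta}\rfloor$) but long temporal extent ($s_2-s_1>2^r/c$), and a path of large variation ($v(X)>\lceil 2^{r\beta}/c\rceil$) but short temporal extent ($s_2-s_1<c2^r$). I would handle each separately.

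For the first case, a path $X$ of temporal extent exceeding $2^r/c > 2^r$ and horizontal variation less than $\lfloor c2^{r\beta}\rfloor \le \lfloor 2^{r\beta}\rfloor$ must, on some time subinterval of length exactly $2^r$, stay within a horizontal window of width $\lfloor 2^{r\beta}\rfloor$; hence it realizes a temporal crossing of some translate of $[0,\lfloor 2^{r\beta}\rfloor]\times[0,2^r]$ sitting inside $D$. I would cover the time interval $[s',t']$ by $O\big((t'-s')/2^r \vee 1\big)$ intervals of length $2^r$ (with overlaps of length $2^r/2$, say, so that any subinterval of length $2^r$ is contained in one of them — this is where the constant $C(c)$, depending on $c$ through the "$/c$" slack, gets absorbed), and cover $[a,b]$ by $O\big((b-a)/2^{r\beta}\vee 1\big)$ horizontal windows of width $\lfloor 2^{r\beta}\rfloor$, again overlapping. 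Every temporal crossing of the required type lies in one of these $O\big(\frac{b-a}{2^{r\beta}}\vee1\big)\cdot O\big(\frac{t'-s'}{2^r}\vee1\big)$ sub-rectangles, and by translation invariance and the definition of $P_r$ each such sub-rectangle has temporal-crossing probability at most $P_r$; the union bound gives the claim for this case. The condition $2^{r\beta}(1-c)\ge 2$ is what guarantees $\lfloor c2^{r\beta}\rfloor$ is genuinely smaller than $\lfloor 2^{r\beta}\rfloor$ (so the window really does fit), and more generally that the floors and ceilings behave.

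The second case is dual: a path with $v(X)>\lceil 2^{r\beta}/c\rceil \ge \lfloor 2^{r\beta}\rfloor$ over a time span less than $c2^r<2^r$ must, restricted to a suitable horizontal strip of width $\lfloor 2^{r\beta}\rfloor$, traverse that strip from left edge to right edge within a time window of length $2^r$ — i.e. it contains a spatial crossing of a translate of $[0,\lfloor 2^{r\beta}\rfloor]\times[0,2^r]$ inside $D$. The same tiling of $D$ by overlapping sub-rectangles of dimensions $\lfloor 2^{r\beta}\rfloor\times 2^r$ works, and each spatial crossing probability is bounded by $P_r$; a union bound over $O\big(\frac{b-a}{2^{r\beta}}\vee1\big)\cdot O\big(\frac{t'-s'}{2^r}\vee1\big)$ rectangles finishes it. Combining the two cases and collecting constants gives the stated bound with a finite $C(c)$. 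The main obstacle I anticipate is purely bookkeeping: getting the overlapping-tiling argument exactly right so that \emph{every} qualifying path genuinely lies inside one tile (the slack factors $1/c$ versus $c$ are precisely engineered for this), and tracking the floor/ceiling roundings carefully enough that the hypothesis $2^{r\beta}(1-c)\ge 2$ is exactly what is needed — there is no conceptual difficulty beyond the reduction-to-$P_r$ idea, which is immediate once one sees that a long-and-thin (resp. short-and-wide) path forces a temporal (resp. spatial) crossing of a standard rectangle.
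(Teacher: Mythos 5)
Your reduction of both cases of $A(D,c,r)$ to temporal or spatial crossings of grid-anchored $\lfloor 2^{r\beta}\rfloor\times 2^r$ sub-rectangles of $D$, followed by a union bound over the grid points, is exactly the paper's argument. One precision worth noting: the tiling step should be taken to be $\frac{1-c}{2}2^r$ in time and $\lfloor\frac{1-c}{2}2^{r\beta}\rfloor$ in space rather than a fixed $2^r/2$, and the property you actually invoke is \emph{not} that every length-$2^r$ subinterval of $[s',t']$ lies inside some tile (that is false for any fixed step size), but rather that the path's time domain $[s_1,s_2]$, having length $>2^r/c > 2^r + \frac{1-c}{2}2^r$, necessarily contains a grid-anchored interval of length $2^r$ --- this is precisely why the factor $1/c$ appears in the definition of $A(D,c,r)$, and it makes concrete your (correct) remark that $C(c)$ must depend on $c$ and will blow up as $c\to1$.
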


\begin{proof}
The proof consists of upper bounding the probabilities for either spatial or temporal crossings of rectangles.  It is sufficient to do the bounds separately.  We will do the bound
for the first case ($s_2 -s_1 > 2^r / c$) only since the proof for the other case is much the same.  Suppose there exists a path $X:[s_1,s_2] \rightarrow [a,b] $ (i.e. it satisfies conditions (i)-(iv) just before Definition \ref{crossing}) so that
$s_2 - s_1  > 2^r/c $ and $\max X(u) - \min X(u) <   \lfloor c2^{r \beta }  \rfloor $.  Let $s_1 ' = \inf \{s \geq s_1, \ s \in s' + \frac{1 - c}{2}2^r  \mathbb {Z} \} $, then the path
$X$ restricted to interval $[s_1', s_1' + 2^r]$ is a temporal crossing of the space-time rectangle $[a,b]\times [s_1', s_1' + 2^r]$ whose variation is less than $\lfloor c 2^{r\beta}  \rfloor $.
We then have that $X([s_1', s_1' + 2^r]) \ \subset \ [x_1', x_1'+ 2^{r \beta } ] $, where
$$
x_1' =\sup \{ x \in  a + \lfloor \frac{1 - c}{2}2^{r \beta } \rfloor \mathbb {Z} : \  x \leq \inf_{u \in [s_1', s_1' + 2^r] } X(u)\}.
$$

From this we see that the existence of $s_{1}, s_{2}$ with $s_{2} - s_{1} >  2^{r}/c$ and a path $X=(X(s) \colon s_{1} \leq s \leq s_{2})$ contained in $D$,  $v(X) < \lfloor c2^{r \beta}\rfloor$ implies the occurrence of $\cup _ {i,j} A_t(i,j,c) $, where $i,j$ range over the set of integers so that
$ (a(i,\beta), s'(j,\beta)):=( a +  i \lfloor \frac{1 - c}{2}2^{r \beta} \rfloor  ,s' + j\frac{1 - c}{2}2^r ) \in D$ and $ A_{t}(i, j, c)$ denotes the event that the space-time rectangle
$$
\left[a(i,\beta),a(i,\beta) +  2^{r \beta}  \right]\times \left[s'(j,\beta),s'(j,\beta) + 2^{r}  \right].
 $$
%
has a temporal crossing. 

The proof is completed by computing the simple upper bound for the number of such $(i,j)$, that is the number of $i $ so that $ a \ \leq \ a +  i \lfloor \frac{1 - c}{2}2^{r \beta} \rfloor \ < \ b$
and $j$ so that $ s' \ \leq \ s' +  j \frac{1 - c}{2}2^{r }  \ < \ t'$.  The latter number is bounded by the least integer superior to $2(t'-s') /(1-c)2^r \ \leq \ \frac{4}{1-c} \left( \frac{t'-s'}{2^r} \vee 1\right)$, while the former is bounded by the least integer superior to $\frac{(b-a)}{\lfloor (1 - c)2^{r \beta}/2 \rfloor }  \leq 1+ 2 \frac{(b-a)}{ (1 - c)2^{r \beta}/2  }$ by our assumption that $ (1 - c)2^{r \beta}/2 \geq 1$.  This in turn is bounded above by $\frac{4}{1-c} \left( \frac{b-a}{2^{r \beta} } \vee 1 \right) $.

\end{proof}

\noindent {\bf Definition.}
\noindent For a spatial interval $I$, $t\ge 0, r \ge 0$ and $c \in (0,1)$, let $B_{t} (c, I, r)$ denote the event that there exists a spatial interval $I' \subset I$ of length less than $\lfloor c 2^{r \beta}\rfloor$ so that there is a temporal crossing of
$I'\times [t, t + 2^{r} ]$.


Then we have:

\begin{lemma} \label{lem4}
Suppose that $2^{r \beta } (1-c) \ \geq \ 2 $, then
$P ( B_{t} (c, I, r )) \leq  C (c)( \frac{|I|}{2^{r \beta}} \vee 1 )  P_r$ for some finite $C (c)$ which depends on $c$ only.
\end{lemma}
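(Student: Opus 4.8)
The plan is to reduce the event $B_t(c,I,r)$ to a union of events of the type already controlled by Proposition \ref{adcr}, by tiling the spatial interval $I$ with overlapping sub-intervals of length $\lfloor c\,2^{r\beta}\rfloor$ (or a comparable length) and observing that any temporal crossing of a short sub-rectangle $I'\times[t,t+2^r]$ with $|I'|<\lfloor c\,2^{r\beta}\rfloor$ must take place within one of these tiles. Concretely, I would cover $I$ by intervals $J_k := \big[a + k\lfloor \tfrac{1-c}{2}2^{r\beta}\rfloor,\ a + k\lfloor \tfrac{1-c}{2}2^{r\beta}\rfloor + 2^{r\beta}\big]$ for those integers $k$ with $J_k\cap I\neq\emptyset$, exactly as in the proof of Proposition \ref{adcr}; since consecutive left endpoints differ by $\lfloor\tfrac{1-c}{2}2^{r\beta}\rfloor$ while each tile has length $2^{r\beta}$, and the shift is $\le \tfrac{1-c}{2}2^{r\beta} \le 2^{r\beta} - \lfloor c\,2^{r\beta}\rfloor$ (using $2^{r\beta}(1-c)\ge 2$, so the floor corrections are absorbed), any sub-interval $I'$ of length $<\lfloor c\,2^{r\beta}\rfloor$ sitting inside $I$ is contained in some $J_k$. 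Hence on $B_t(c,I,r)$ there is a $k$ for which $J_k\times[t,t+2^r]$ has a temporal crossing.

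The second step is to bound each such temporal-crossing probability by $P_r$: the rectangle $J_k\times[t,t+2^r]$ has spatial side $2^{r\beta}$ (rather than $\lfloor 2^{r\beta}\rfloor$) and temporal side $2^r$, so after restricting to the lattice it is contained in a rectangle of the form $[0,\lfloor 2^{r\beta}\rfloor]\times[t,t+2^r]$ up to translation; by translation invariance of the renewal/Poisson system and the definition of $P_r$ (taking the supremum over product renewal measures with death points starting strictly before the base time, which is available here since $t\ge 0$ and we may push the base down), the probability of a temporal crossing of $J_k\times[t,t+2^r]$ is at most $P_r$. Then a union bound over the relevant $k$ gives $P(B_t(c,I,r))\le N\,P_r$, where $N$ is the number of tiles meeting $I$.

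The third step is the routine count of $N$: the number of integers $k$ with $a+k\lfloor\tfrac{1-c}{2}2^{r\beta}\rfloor$ ranging over (a slight enlargement of) $I$ is at most $1 + \tfrac{|I| + 2^{r\beta}}{\lfloor(1-c)2^{r\beta}/2\rfloor} \le C(c)\big(\tfrac{|I|}{2^{r\beta}}\vee 1\big)$, again using $(1-c)2^{r\beta}/2\ge 1$ to control the floor in the denominator, with $C(c)$ depending only on $c$ (e.g. $C(c)=\tfrac{8}{1-c}$ works comfortably). Combining the three steps yields the claimed bound. The only mild obstacle is bookkeeping with the floor functions in the tile endpoints and the length threshold $\lfloor c\,2^{r\beta}\rfloor$ — one must make sure the overlap between consecutive tiles genuinely exceeds $\lfloor c\,2^{r\beta}\rfloor$ so that no short sub-interval can slip between two tiles — but this is exactly the kind of estimate already carried out in Proposition \ref{adcr}, and the hypothesis $2^{r\beta}(1-c)\ge 2$ is precisely what makes it go through.
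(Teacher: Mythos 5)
Your proposal matches the paper's proof: both tile $I$ with overlapping intervals of length $\approx 2^{r\beta}$ whose left endpoints advance in steps of $\lfloor\tfrac{1-c}{2}2^{r\beta}\rfloor$, observe that any short sub-interval of length $<\lfloor c\,2^{r\beta}\rfloor$ must land inside some tile, bound each tile's temporal-crossing probability by $P_r$, and count the tiles using $2^{r\beta}(1-c)\geq 2$ to control the floor. The decomposition, the use of $P_r$, and the counting estimate are all the same as in the paper's argument.
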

\begin{proof}
This follows in similar fashion to the previous result.  Let $I \ = \ [a,b] $ and as above let $ a(i,\beta) \ = \ a +  i \lfloor \frac{1 - c}{2}2^{r \beta} \rfloor $
for $0 \leq i \leq \frac{b-a}{\lfloor 2^{r \beta } (1-c)/2 \rfloor } $.  Then every spatial interval, $J'$, of length at most $\lfloor c 2^{r \beta} \rfloor$ which is a subset of $I$ is contained in
an interval $[ a(i,\beta), a(i,\beta)+ \lfloor 2^{r \beta } \rfloor ] $ for some $0 \leq i \leq \frac{b-a}{\lfloor 2^{r \beta } (1-c)/2 \rfloor }  $.  As before under the condition   $2^{r \beta } (1-c) \ \geq \ 2 $, the number of such $i$ is less than $\frac{4}{1-c} \left( \frac{b-a}{2^{r \beta} } \vee 1 \right) $ and the result follows.
\end{proof}
\vspace{0.3cm}
Similarly we have,
\begin{lemma}  \label{lemad2}
For a space-time rectangle
$R \ = \ [a,b]\times [s,s+ 2^r]$, $k\in(0,r]\cap\mathbb Z$  and $c \in (0,1)$, let $W(R,r, c,k) $ be the event that there exists a spatial crossing of a rectangle $I \times[s,s+ 2^r] \subset R $, where interval $I$ has length at least $2^{r \beta}/c$. \\
We suppose that $b-a > 2^{(r-k) \beta}$. Then
$$
P(W(R,r,c,k)) \ \leq \ K(c)  \frac{b-a}{2^{(r-k) \beta}} P_r
$$
for suitable $K(c)$ finite.
\end{lemma}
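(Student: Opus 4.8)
The plan is to mimic the discretization-plus-union-bound scheme already used for Proposition~\ref{adcr} and Lemma~\ref{lem4}; the only twist is that here we begin with a spatial crossing of a \emph{wide} rectangle and must restrict it to a standard-width one, rather than the reverse. Suppose $W(R,r,c,k)$ occurs, witnessed by a spatial interval $I=[a',b']\subseteq[a,b]$ with $b'-a'\ge 2^{r\beta}/c$ and a path $\gamma\colon[\sigma_1,\sigma_2]\to\IZ$ realizing a spatial crossing of $I\times[s,s+2^r]$ in the sense of Definitions~\ref{crossing2}--\ref{crossing3}; in particular $[\sigma_1,\sigma_2]\subseteq[s,s+2^r]$, $\gamma(\sigma_1)=a'$, $\gamma(\sigma_2)=b'$, and $\gamma$ stays within the columns $[a',b']$. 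For any integer $m$ with $a'\le m$ and $m+\lfloor 2^{r\beta}\rfloor\le b'$, restrict $\gamma$ to the interval between its last visit to column $m$ and its first visit to column $m+\lfloor 2^{r\beta}\rfloor$; since arrows connect only nearest neighbours the path jumps by $\pm1$ and hence cannot skip columns, so this restriction is a well-defined spatial crossing of $[m,m+\lfloor 2^{r\beta}\rfloor]\times[s,s+2^r]$ confined to that rectangle (the endpoint bookkeeping being the same routine càdlàg argument used implicitly in the proof of Proposition~\ref{adcr}). Consequently $W(R,r,c,k)\subseteq\bigcup_m C_m$, where $C_m$ is the event that $[m,m+\lfloor 2^{r\beta}\rfloor]\times[s,s+2^r]$ has a spatial crossing.

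Next I would thin this union to a grid. For a fixed qualifying $I=[a',b']$ the admissible values of $m$ form a run of $b'-a'-\lfloor 2^{r\beta}\rfloor+1\ge 2^{r\beta}(1-c)/c+1=:\nu$ consecutive integers (using $\lfloor 2^{r\beta}\rfloor\le 2^{r\beta}$), and $\nu\ge1$. Hence the grid $m_j=a+jg$ with spacing $g=\lfloor\nu\rfloor\ge1$ meets every such run, so $W(R,r,c,k)\subseteq\bigcup_j C_{m_j}$, where $j$ ranges over the integers with $a\le m_j$ and $m_j+\lfloor 2^{r\beta}\rfloor\le b$, a set of size at most $(b-a)/g+1$. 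Each $C_{m_j}$ is the event of a spatial crossing of a space-time rectangle of dimensions $\lfloor 2^{r\beta}\rfloor\times2^r$ sitting inside $R$, so Definition~\ref{pr}, together with translation invariance and the remark following that definition (which moves the time window from $[0,2^r]$ to $[s,s+2^r]$), gives $P(C_{m_j})\le P_r$. A union bound then yields $P(W(R,r,c,k))\le\big((b-a)/g+1\big)P_r$, and since $g=\lfloor\nu\rfloor\ge\nu/2$ and $\nu>2^{r\beta}(1-c)/c$ we get
$$
\frac{b-a}{g}+1\ \le\ \frac{2c}{1-c}\cdot\frac{b-a}{2^{r\beta}}+1\ \le\ K(c)\Big(\frac{b-a}{2^{r\beta}}\vee1\Big),\qquad K(c):=\frac{2c}{1-c}+1<\infty .
$$

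Finally, the hypothesis $b-a>2^{(r-k)\beta}$ together with $2^{r\beta}\ge2^{(r-k)\beta}$ (valid since $0<k\le r$) gives $\big(\tfrac{b-a}{2^{r\beta}}\vee1\big)\le\tfrac{b-a}{2^{(r-k)\beta}}$, and hence $P(W(R,r,c,k))\le K(c)\tfrac{b-a}{2^{(r-k)\beta}}P_r$, as claimed; note also that if $b-a<2^{r\beta}/c$ then $W(R,r,c,k)=\emptyset$ and there is nothing to prove. I expect the only genuinely delicate point to be the path-restriction step in the first paragraph — verifying, from the nearest-neighbour structure of the arrows, that cutting $\gamma$ between its last visit to column $m$ and its first visit to column $m+\lfloor 2^{r\beta}\rfloor$ produces a bona fide crossing confined to $[m,m+\lfloor 2^{r\beta}\rfloor]\times[s,s+2^r]$; the remaining counting is routine and parallels Lemma~\ref{lem4}.
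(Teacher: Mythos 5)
Your proposal is correct and matches the approach the paper intends (the paper gives no explicit proof of Lemma~\ref{lemad2} — it just says ``Similarly we have'' — but the grid-plus-union-bound scheme you set up is precisely the scheme of Proposition~\ref{adcr} and Lemma~\ref{lem4}). The only imprecision is in the cutting step: ``its last visit to column $m$ and its first visit to column $m+\lfloor 2^{r\beta}\rfloor$'' must be read as ``the last visit to $m$ \emph{before} the first visit to $m+\lfloor 2^{r\beta}\rfloor$'' — without that ordering the two times can be reversed, since a path from $a'$ to $b'$ may return to column $m$ after first touching $m+\lfloor 2^{r\beta}\rfloor$; with the correct ordering the restricted path is trapped in $[m,m+\lfloor 2^{r\beta}\rfloor]$ by the nearest-neighbour structure exactly as you say, and the rest of your counting and the final comparison $\bigl(\tfrac{b-a}{2^{r\beta}}\vee1\bigr)\le\tfrac{b-a}{2^{(r-k)\beta}}$ (using $k\le r$ and $b-a>2^{(r-k)\beta}$) is right.
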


And similarly we have:
\begin{lemma} \label{lemad1}
For a space-time rectangle
$R \ = \ [a,b]\times [s,t]$, where $b-a \geq 2^{(r-k^*) \beta } $ and $ t-s \geq \ 2^r  , \ k^*\in(0,r]\cap\mathbb Z$ and $c \in (0,1)$, let $H(R,r,c, k^*) $  be the event that there exists a spatial crossing of a rectangle $I\times J  \subset R $ so that  \\
(i) interval $I$ has length $\lfloor 2^{r \beta}\rfloor $ and its left endpoint is in $ \lfloor 2^{(r-k^*) \beta}\rfloor\IZ$, \\
(ii) interval $J$ has length less than $c2^r $. \\
Then        
\begin{equation}
P(H(R,r,c, k^*)) \ \leq \ C(c)  \frac{b-a}{2^{(r-k^*) \beta}} \frac{t-s}{2^r} P_r.
\end{equation}
\end{lemma}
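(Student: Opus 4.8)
Here I follow the same union-bound scheme used for Proposition~\ref{adcr} and Lemmas~\ref{lem4} and \ref{lemad2}: I produce a finite family of ``standard'' space-time rectangles, each a translate of $[0,\lfloor 2^{r\beta}\rfloor]\times[0,2^{r}]$, such that on $H(R,r,c,k^{*})$ at least one of them is spatially crossed, bound each such crossing probability by $P_{r}$, and count the family.

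First I would list the admissible spatial intervals. By condition~(i), $I=[m\lfloor 2^{(r-k^{*})\beta}\rfloor,\,m\lfloor 2^{(r-k^{*})\beta}\rfloor+\lfloor 2^{r\beta}\rfloor]$ for some $m\in\IZ$, and $I\subset[a,b]$ forces $m$ into an interval of length at most $(b-a)/\lfloor 2^{(r-k^{*})\beta}\rfloor$; if no such $I$ fits then $H$ is empty and there is nothing to prove. Since $r\ge k^{*}$ and $\beta>0$ give $2^{(r-k^{*})\beta}\ge 1$, hence $\lfloor 2^{(r-k^{*})\beta}\rfloor\ge\frac12 2^{(r-k^{*})\beta}$, and since $b-a\ge 2^{(r-k^{*})\beta}$ lets us absorb the additive ``$+1$'', the number of admissible $I$ is at most $3(b-a)/2^{(r-k^{*})\beta}$. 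For the time direction I would fix the grid $s_{j}=s+j\frac{1-c}{2}2^{r}$, $j\ge 0$, and the length-$2^{r}$ intervals $J_{j}=[s_{j},s_{j}+2^{r}]$. Any interval $J\subset[s,t]$ with $|J|<c2^{r}$ lies in some $J_{j}$, because the set of $j$ with $s_{j}\le\inf J$ and $s_{j}+2^{r}\ge\sup J$ corresponds to a window of length $2^{r}-|J|>(1-c)2^{r}$, i.e.\ at least twice the grid spacing. Using $t-s\ge 2^{r}$, the number of $j$ for which $J_{j}$ meets $[s,t]$ is at most $C(c)(t-s)/2^{r}$.

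Since a spatial crossing of $I\times J$ is \emph{a fortiori} a spatial crossing of $I\times J_{j}$ whenever $J\subset J_{j}$ (the same path works), the event $H(R,r,c,k^{*})$ is contained in the union, over admissible $I$ and relevant $j$, of the events that $I\times J_{j}$ has a spatial crossing. Each $I\times J_{j}$ is a translate of $[0,\lfloor 2^{r\beta}\rfloor]\times[0,2^{r}]$, so by translation invariance and the definition of $P_{r}$ --- which, via Remark~\ref{comparison}, already dominates crossing probabilities for renewal clocks that have been running since time $0$ --- its spatial-crossing probability is at most $P_{r}$. A union bound then gives
$$
P(H(R,r,c,k^{*}))\ \le\ \frac{3(b-a)}{2^{(r-k^{*})\beta}}\cdot C(c)\,\frac{t-s}{2^{r}}\cdot P_{r},
$$
which is the assertion. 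I do not expect a genuine obstacle here: the scheme is identical to the earlier lemmas, and the only point requiring the by-now-standard care is the last step, passing from a crossing inside $R$ (where the renewal processes are not ``fresh'') to the bound by $P_{r}$ --- precisely what Definition~\ref{pr} together with Remark~\ref{comparison} is built to handle --- together with the routine bookkeeping needed to check that the two grids indeed cover every configuration allowed by $H$.
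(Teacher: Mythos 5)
Your proof is correct and follows essentially the same route as the paper's: both cover the admissible $I\times J$ rectangles by a grid of translates of $[0,\lfloor 2^{r\beta}\rfloor]\times[0,2^r]$ (with spatial grid $\lfloor 2^{(r-k^*)\beta}\rfloor\IZ$ and temporal grid spacing $\tfrac{1-c}{2}2^r$), observe that any such crossing is \emph{a fortiori} a spatial crossing of one of these standard rectangles, bound each by $P_r$, and count the grid points using the hypotheses $b-a\geq 2^{(r-k^*)\beta}$ and $t-s\geq 2^r$. The only differences are cosmetic bookkeeping details (your constant $3$ versus the paper's $4\cdot 8/(1-c)$), and you spell out the "window of length $2^r-|J|$ is at least twice the grid spacing" step, which the paper leaves implicit.
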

\begin{proof}
Again we consider events
$$
A(i,j)= \{ \exists \text{ a spatial crossing of } [i \lfloor 2^{(r-k^*) \beta}\rfloor, i  \lfloor 2^{(r-k^*) \beta}\rfloor +  \lfloor 2^{r\beta}\rfloor ] \times [t_j, t_j + 2^ r]\},
$$
where $[i \lfloor 2^{(r-k^*) \beta}\rfloor, i  \lfloor 2^{(r-k^*) \beta}\rfloor +  \lfloor 2^{r\beta}\rfloor ] \subset [a,b] $ and
$t_j :=  s + 2^r (1-c) /2 \in [s,t]$.
Once more  $P(A(i,j) ) \ \leq \ P_r $ for all $(i,j)$ and the event $H(R,r,c, k^*) \ \subset \ \cup_{i,j} A(i,j)$ where the union is over $(i, j) $ satisfying the above constraint.
The number of such $(i,j) $ is the product of $ \lceil  (b-a) /  ( \lfloor 2^{(r-k^*) \beta}\rfloor) \rceil $ with $ \lceil  2(t-s)/2^r (1-c) \rceil $. By our assumptions
$b-a \geq 2^{(r-k^*) \beta } $ and $ t-s \geq \ 2^r $ this product is less than $4  \frac{b-a}{2^{(r-k^*) \beta}}  \times 8\frac {(t-s)}{2^r (1-c)} $.
\end{proof}

\vspace{0.3cm}

\noindent  {\bf Definition.}  For integer $\epsilon'>0$, $L\in\epsilon'\mathbb N$,
$T>0$  and space-time rectangle $D=[a, b] \times[0, T']$, with $T'\geq 3T$,
let $F ( \epsilon', L,T, D)$ be the event that there exists spatial interval $I' = [a',b'] \ \subset \ [a,b]$ and $[t_1, t_2] \ \subset \ [0, T']$ and a spatial crossing
of $I' \times [t_1,t_2]$, $\gamma : \ [t_1, t_2]  \subset [0,T'] \ \rightarrow \ I'$ so that \\
\indent
(i) $a',b' \ \in \ \epsilon ' \mathbb Z , \ b'-a' \  \leq  \ L$ \\
\indent
(ii) $t_2 - t_1  \in [T/2, 3T] $\\
\indent
(iii)$  \gamma(t_1) \ = \ a' , \gamma (t_2) \ = \ b' $.
\vspace{0.3cm}
\begin{prop}\label{p11} For $\epsilon ' < b-a $, there is a universal nontrivial $C$ so that  \label{diagonal}
\begin{eqnarray*}
&P(F ( \epsilon', L,T, D))\leq&\\
& C\left(\frac{T'}{T} \right)%
\frac{b-a}{\epsilon'}\frac{L}{\epsilon'}
P \left (\exists \textrm{ temporal crossing of } [0,L]\times [T', T'+3T]\right)^{\frac{1}{10}}.&
\end{eqnarray*}
\end{prop}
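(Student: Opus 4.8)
\medskip
\noindent {\it Proof plan.} The plan is to bound $P(F(\epsilon',L,T,D))$ by a union bound over a mesh‑discretized family of ``localized slow crossings'', and then to convert each such localized crossing into a fractional power of the probability of the target temporal crossing by chaining copies of the building block $A_0$ of $(\ref{A0})$ via FKG, exactly as prepared in Section~\ref{sec;4}. The reason a temporal crossing can be extracted at all is condition (ii) in the definition of $F$: the crossings in question are ``tall'' (temporal extent at least $T/2$), not ``long'', so chaining $O(1)$ of them already spans a rectangle of temporal extent $3T$.

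First I would fix a time mesh $\delta$ comparable to $T$, small enough (say $\delta\le T/40$) that the temporal gaps $h$ occurring below always satisfy $\delta<h/8$. Given any crossing witnessing $F$, with lattice endpoints $a'\le b'$ ($v:=b'-a'\le L$) and start/end times $t_1<t_2$ with $t_2-t_1\in[T/2,3T]$ and $t_1,t_2\in[0,T']$, round the times to the mesh: $t_1\in[\ell\delta,(\ell+1)\delta]$, $t_2\in[m\delta,(m+1)\delta]$. Since $T/2\le t_2-t_1\le 3T$, the integer $m-\ell$ runs over a bounded set and $h:=(m-\ell)\delta$ is of order $T$. Restricted to $[t_1,t_2]$ the crossing witnesses the event $G(a',b',\ell,m)$ that there is a crossing \emph{inside} $[a',b']\times[\ell\delta,(m+1)\delta]$ from $\{a'\}\times[\ell\delta,(\ell+1)\delta]$ to $\{b'\}\times[m\delta,(m+1)\delta]$. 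Hence $F\subseteq\bigcup_{a',b',\ell,m}G(a',b',\ell,m)$, the union ranging over $O(\tfrac{b-a}{\epsilon'})$ choices of $a'$ (here the hypothesis $\epsilon'<b-a$ is used), $O(\tfrac{L}{\epsilon'})$ choices of $b'$, $O(\tfrac{T'}{T})$ choices of $\ell$ (since $t_1\le T'$) and $O(1)$ choices of $m-\ell$; so
$$
P(F(\epsilon',L,T,D))\ \le\ C\,\frac{T'}{T}\,\frac{b-a}{\epsilon'}\,\frac{L}{\epsilon'}\,\max_{a',b',\ell,m}P\bigl(G(a',b',\ell,m)\bigr).
$$

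Next I would estimate a single term $P(G(a',b',\ell,m))$. Assume $v\ge 1$ (the degenerate case $a'=b'$, where $G$ merely asks for a renewal‑free time interval of length of order $T$ on one line, is treated the same way, more simply). Put $c=3/4$ and $T_0=\tfrac43 h$, so $cT_0=h$, $1/2<c<1$ and $\delta<cT_0/8$. After the spatial translation $a'\mapsto 0$ (legitimate by translation invariance of the Harris system, and recalling Remark~\ref{fkg}, which allows the base of $A_0$ to sit at an arbitrary time), $G(a',b',\ell,m)$ is contained in the translate to time $v_0:=\ell\delta$ of the building block $A_0(c,\delta,v,T_0)$ of $(\ref{A0})$; write $A_0^{v_0}$ for this translate. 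As in the proof of Corollary~\ref{FKG2} together with Remark~\ref{fkg}, if $n$ is the least integer with $n(cT_0-\delta)+cT_0\ge\delta+3T$, then $n+1$ suitably placed alternating translates of $A_0^{v_0}$ intersect inside the event of a temporal crossing of $[0,v]\times[v_0+\delta,\,v_0+\delta+3T]$; since $cT_0=h$ is of order $T$ one checks $N:=n+1\le 10$ (in fact $N=7$ works). Lemma~\ref{build} (FKG) then yields
$$
P\bigl(G(a',b',\ell,m)\bigr)\ \le\ P\bigl(A_0^{v_0}\bigr)\ \le\ P\bigl(\exists\text{ temporal crossing of }[0,v]\times[v_0+\delta,\,v_0+\delta+3T]\bigr)^{1/N}.
$$
Finally, a temporal crossing of $[0,v]\times[\,\cdot\,]$ is also one of $[0,L]\times[\,\cdot\,]$ because $v\le L$; and since $v_0+\delta\le T'$ and the probability of a temporal crossing of a rectangle of fixed shape is nondecreasing in its time position --- this is the point at which Hypothesis~A enters, through Remark~\ref{comparison} --- the last probability is at most $q:=P(\exists\text{ temporal crossing of }[0,L]\times[T',T'+3T])$. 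Hence $P(G(a',b',\ell,m))\le q^{1/N}\le q^{1/10}$ (as $q\le1$ and $N\le10$), and feeding this into the union bound of the previous paragraph completes the proof.

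The main obstacle is the quantitative bookkeeping in the second step: the mesh $\delta$, the constant $c$ and the number of chained copies $N$ must be chosen \emph{simultaneously} so that (a) the constraints $1/2<c<1$ and $\delta<cT_0/8$ of $(\ref{A0})$ hold using only the one‑sided bound $t_2-t_1\ge T/2$, and (b) $N\le 10$, i.e.\ roughly $6$--$7$ chained copies of a crossing of temporal extent of order $T$ already reach temporal extent $3T$ --- it is precisely this ``tallness'' of the crossings appearing in $F$ that yields an exponent at least $1/10$ rather than something weaker. The only other non‑routine ingredient is the monotonicity of crossing probabilities in the time coordinate noted above.
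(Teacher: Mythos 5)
Your proof is correct and takes essentially the same approach as the paper's: both discretize space--time on a mesh of width comparable to $T$ (the paper uses $\epsilon=T/17$, you use $\delta\le T/40$), take a union bound over the $O\bigl(\tfrac{T'}{T}\cdot\tfrac{b-a}{\epsilon'}\cdot\tfrac{L}{\epsilon'}\bigr)$ discrete placements, and convert each localized ``diagonal'' crossing into a $1/10$ power of the target temporal-crossing probability by chaining $O(1)$ translates of the building block $A_0$ through Lemma~\ref{build}/Corollary~\ref{FKG2} together with Remark~\ref{fkg}, finishing with the spatial inclusion $[0,v]\subset[0,L]$ and the time-monotonicity of crossing probabilities from Remark~\ref{comparison}.
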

\vspace{0.3cm}

\begin{proof}
\noindent We choose $\epsilon = \frac{T}{17}$ and note that the event $F ( \epsilon', L, T, D)$ is contained in the union of
$$\left\{ \exists \textrm{ spatial crossing from}\;  \{k \epsilon'\}\times [i \epsilon, (i +1) \epsilon] \textrm{ to}\; \{(k+k') \epsilon'\}\times [j \epsilon, (j+1) \epsilon  ] \right\}$$
 over integers $i, j, k, k'$ relevant i.e. $ i\epsilon, (j+1) \epsilon \in [0, T']$, $(j-i) \epsilon \in [\frac{1}{2} T, 3T]$, $k \epsilon', (k+k')\epsilon' \in [a,b] \cap \epsilon ' \mathbb Z$.
By Corollary $\ref{FKG2}$ (see also Remark \ref{fkg}) 
the probability of this event is less than
 $$P\left( \exists \textrm{ temporal crossing of }[k \epsilon',k \epsilon' +k' \epsilon']\times [(i+1)\epsilon ,(i+1)\epsilon  + 3 T]\right)^{1/10},$$
 which is less than
 $$P\left( \exists \textrm{ temporal crossing of }[0,L]\times [(i+1)\epsilon ,(i+1)\epsilon  + 3 T]\right)^{1/10}.$$
 by monotonicity.  By our choice of $\epsilon, \, (i+1)\epsilon   <  T'$, so by the stochastic monotonicity of our renewal processes as used in the proof of Lemma \ref{build} (see Remark \ref{comparison}), this last term is dominated by
 $$P\left( \exists \textrm{ temporal crossing of }[0,L]\times [T' ,T' + 3 T]\right)^{1/10}.$$
 and the result follows from counting the number of choices of $k, k' $ as before.

\end{proof}



\subsection{Temporal crossings of $\lfloor 2^{n \beta}\rfloor\times 2^{n-k}$ rectangles}
\label{ssec;5.2}




We now apply the above estimates to the event of a temporal crossing of a $\lfloor 2^{n \beta}\rfloor\times 2^{n-k}$ rectangle,
where $k$ is a large fixed integer.
The goal is to prove
%

\begin{prop}\label{xiv}
	\noindent Let $k$ be a positive integer. For $1 \le i \le 2^k-1$, consider a collection
	$\{\tau_x,\,x\in[0, \lfloor 2^{n \beta}\rfloor]\}$ of time points in $[(i-1)2^{n-k}, i2^{n-k}]$, and a probability which is the the product of the infection Poisson process probability and the renewal probability on the timelines of $[0, \lfloor 2^{n \beta}\rfloor]$  starting from $\{(x,\tau_x),\,x\in[0, \lfloor 2^{n \beta}\rfloor]\}$. Let us call that probability $\tilde P$.
	Then there exists $n_0$ so that for $n \geq n_0$, the $\tilde P$-probability that there is a temporal crossing of $[0, \lfloor 2^{n \beta}\rfloor]\times [i2^{n-k}, (i+1)2^{n-k}]$
	is less than
	\begin{equation}\label{eq:xiv}
	C (k) \left(P_{n-k} \vee P_{n-k-1}\right)^{\frac{1}{10}},	
	\end{equation}
	uniformly over $\{\tau_x\}$, with $P_r$ as in Definition \ref{pr} and $C(k) $ a finite constant.
\end{prop}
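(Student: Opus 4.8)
By translating time we may assume the rectangle is $R:=[0,\lfloor 2^{n\beta}\rfloor]\times[0,2^{n-k}]$ and that every renewal starts at a time $\le 0$, which places $\tilde P$ within the supremum defining the quantities $P_r$ (Definition~\ref{pr}) at the two scales $r=n-k$ and $r=n-k-1$. Fix, depending only on $\beta$, a constant $c<1$ so close to $1$ that $c\,2^{\beta}>1$, a large integer $k^{*}$ with $c\,2^{\beta}>1+2^{-k^{*}\beta}$, and $n_{0}$ large enough that for $n\ge n_{0}$ all the flooring below is harmless, all grid meshes are $\ge1$, one has $k^{*}\le n-k-1$, and the standing hypotheses ($2^{r\beta}(1-c)\ge2$, etc.) of the lemmas of Subsection~\ref{ssec;5.1} hold for $r\in\{n-k-1,\,n-k\}$; below $C(k)$ denotes a finite constant depending only on $k$ and $\beta$, not always the same. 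Fix a bottom-to-top path $X$ inside $R$ realising the temporal crossing and argue according to its spatial variation $v(X)$ (notation~(\ref{var})). If $v(X)<\lfloor c\,2^{(n-k)\beta}\rfloor$, then $X$ is itself a temporal crossing of $[\min X,\max X]\times[0,2^{n-k}]$, i.e.\ the event $B_{0}(c,[0,\lfloor 2^{n\beta}\rfloor],n-k)$ of Lemma~\ref{lem4} occurs, and that lemma gives $\tilde P\le C(c)\,\big(\lfloor 2^{n\beta}\rfloor/2^{(n-k)\beta}\vee1\big)\,P_{n-k}\le C(k)\,P_{n-k}$ for this part.

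Suppose now $v(X)\ge\lfloor c\,2^{(n-k)\beta}\rfloor$. Writing $\lfloor c\,2^{(n-k)\beta}\rfloor=\lfloor c\,2^{\beta}\cdot2^{(n-k-1)\beta}\rfloor$ and using $c\,2^{\beta}>1+2^{-k^{*}\beta}$, for $n\ge n_{0}$ the interval $[\min X,\max X]$ contains a subinterval $[g,g+\lfloor 2^{(n-k-1)\beta}\rfloor]$ with $g\in\lfloor 2^{(n-k-1-k^{*})\beta}\rfloor\IZ$; since $X$ is a nearest-neighbour c\`adl\`ag path it then contains a sub-path $Y=(X(u)\colon u_{1}\le u\le u_{2})$, with $[u_{1},u_{2}]\subset[0,2^{n-k}]$, which is a spatial crossing of that interval. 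Put $\Delta:=u_{2}-u_{1}\le 2^{n-k}$. If $\Delta<c\,2^{n-k-1}$, then $Y$ is of the type controlled by Lemma~\ref{lemad1} (here $b-a=\lfloor 2^{n\beta}\rfloor\ge 2^{(n-k-1-k^{*})\beta}$ and the temporal side $2^{n-k}\ge 2^{n-k-1}$), whence $\tilde P\le C(c)\,\frac{\lfloor 2^{n\beta}\rfloor}{2^{(n-k-1-k^{*})\beta}}\,\frac{2^{n-k}}{2^{n-k-1}}\,P_{n-k-1}\le C(k)\,P_{n-k-1}$. If instead $\Delta\ge c\,2^{n-k-1}$, set $T:=2^{n-k}/3$; then $R$ has temporal side $T'=2^{n-k}=3T$ and, using $c\ge1/3$, $\Delta\in[\,c\,2^{n-k-1},2^{n-k}\,]\subset[T/2,3T]$, so that $Y$ — after, if necessary, shortening it to the first grid point it reaches, a step that only affects constants — realises the event $F(\epsilon',L,T,R)$ of Proposition~\ref{p11}, with mesh $\epsilon'=\lfloor 2^{(n-k-1-k^{*})\beta}\rfloor$ and $L\le\lfloor 2^{(n-k-1)\beta}\rfloor$. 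Proposition~\ref{p11} then bounds $\tilde P$ by $C\,(T'/T)\,(\lfloor 2^{n\beta}\rfloor/\epsilon')\,(L/\epsilon')$ times $P\big(\exists\text{ temporal crossing of }[0,L]\times[T',T'+3T]\big)^{1/10}$; here $T'/T=3$ and $\lfloor 2^{n\beta}\rfloor/\epsilon',\,L/\epsilon'$ are bounded in terms of $k$ and $\beta$, while a temporal crossing of $[0,L]\times[T',T'+3T]$ — whose temporal side is $3T=2\cdot2^{n-k-1}$ and whose width is $L\le\lfloor 2^{(n-k-1)\beta}\rfloor$ — forces temporal crossings of the two stacked $[0,L]\times(\text{side }2^{n-k-1})$ rectangles, each of probability $\le P_{n-k-1}$ (monotonicity in the width, together with Definition~\ref{pr} and Remark~\ref{comparison} for the starting times). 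Thus $\tilde P\le C(k)\,P_{n-k-1}^{1/10}$ in this last case. Summing the three contributions and using $P_r\le P_r^{1/10}$ (valid since $P_r\le1$) yields $\tilde P(\text{temporal crossing of }R)\le C(k)\,P_{n-k}+C(k)\,P_{n-k-1}+C(k)\,P_{n-k-1}^{1/10}\le C(k)\,(P_{n-k}\vee P_{n-k-1})^{1/10}$, which is~(\ref{eq:xiv}).

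\textbf{Main obstacle.} The delicate point is the dichotomy in the second paragraph. To keep all the union bounds uniform in $n$ — that is, to keep the number of relevant spatial positions equal to a $C(k)$ — one is forced to work with a grid of mesh comparable to $2^{(n-k-1)\beta}$, and this leaves a band of variations $v(X)$ near $2^{(n-k)\beta}$ that are too large to be absorbed by a discretised width-$2^{(n-k)\beta}$ window (the first case), yet not, a priori, large enough to contain a full width-$2^{(n-k)\beta}$ spatial crossing. Running the analysis at the two \emph{consecutive} scales $n-k$ and $n-k-1$, whose widths differ by the fixed factor $2^{\beta}>1$, is precisely what bridges this band, and is the reason both $P_{n-k}$ and $P_{n-k-1}$ (rather than just one of them) enter~(\ref{eq:xiv}); one should also expect to need minor variants of the Subsection~\ref{ssec;5.1} lemmas (permitting widths slightly below $\lfloor 2^{r\beta}\rfloor$, at the cost of one more level of union bound) to make the extraction of $Y$ fit the hypotheses cleanly. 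The $\tfrac{1}{10}$-th power is not a difficulty but the gain coming from Proposition~\ref{p11}, hence ultimately from Lemma~\ref{build} and the FKG inequality: a spatial sub-crossing that is forced to climb a macroscopic amount in time over a narrow spatial window can be stacked into a temporal crossing of a rectangle several times as tall, and it is that stacking which manufactures the exponent.
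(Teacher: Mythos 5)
Your plan closely tracks the paper's own proof of Proposition~\ref{xiv}: a case split on the variation $v(X)$ of the crossing, with the small-variation regime sent to Lemma~\ref{lem4}, and the large-variation regime analysed by extracting a grid-anchored spatial sub-crossing of width $\approx\lfloor 2^{(n-k-1)\beta}\rfloor$ and splitting on its duration, so that Lemma~\ref{lemad1} and Proposition~\ref{p11} respectively produce a $P_{n-k-1}$ term and a $P_{n-k-1}^{1/10}$ term. This reorganizes the paper's Cases~0--3 into two, but the lemmas invoked and the two-scale ($n-k$, $n-k-1$) structure are the same.

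There is, however, a genuine gap in the sub-case $\Delta\ge c\,2^{n-k-1}$. The event $F(\epsilon',L,T,R)$ in Proposition~\ref{p11} requires a crossing whose \emph{both} endpoints lie in $\epsilon'\mathbb Z$ and whose duration lies in $[T/2,3T]$. Your $Y$ has only its left endpoint $g\in\epsilon'\mathbb Z$; the right endpoint $g+\lfloor 2^{(n-k-1)\beta}\rfloor$ in general does not. You propose to ``shorten it to the first grid point it reaches, a step that only affects constants,'' but this shortening is not harmless: replacing the right endpoint by the largest grid point $b'\le g+\lfloor 2^{(n-k-1)\beta}\rfloor$ replaces $Y$ by a sub-crossing of $[g,b']$ whose duration can be arbitrarily smaller than $\Delta$, since the discarded tail of $Y$ is free to wander over all of $[g,g+\lfloor 2^{(n-k-1)\beta}\rfloor]$ and may absorb essentially the whole of $\Delta$. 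When that happens the shortened crossing has duration $<T/2$, $F(\epsilon',L,T,R)$ does not occur, and Proposition~\ref{p11} gives nothing. The paper avoids exactly this: in its Case~3 it first grid-aligns \emph{both} ends (via $\tau_i',\tau_i''$ and $\sigma_i',\sigma_i''$, with $X(\tau_i'),X(\sigma_i')$ on the grid) and only then dichotomizes on the already-aligned duration $\sigma_i''-\tau_i''$; its subcase $\sigma_i''-\tau_i''\le\frac34\,2^{n-k-1}$, handled by Lemma~\ref{lemad1}, is precisely the regime your split misses. The fix is simple --- either extract $Y$ with both endpoints already in $\epsilon'\mathbb Z$ (also pick $b'\in\epsilon'\mathbb Z$ inside $[\min X,\max X]$, so no shortening is needed), or split on the duration of the shortened crossing rather than on $\Delta$ and add a bound of Lemma~\ref{lemad1} type when that duration is small; in either case you will need the variant of Lemma~\ref{lemad1} allowing widths slightly below $\lfloor 2^{r\beta}\rfloor$ that you already anticipate in your closing paragraph.
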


\noindent {\bf Remark.} The situation described in the statement above comes up when we observe that a temporal crossing of $[0, \lfloor 2^{n \beta}\rfloor]\times [0, 2^{n}]$ implies $2^k$ temporal crossings of $\lfloor 2^{n \beta}\rfloor\times 2^{n-k}$ subrectangles. Taking advantage of the fact that $\int t^\alpha \mu(dt) <\infty$ for some $\alpha> 1$, we will (outside a set of small probability) restrict to crossings of $2^{k-1}$  alternating subrectangles, with given renewal starting marks in the timelines of previous respective subrectangles, to ensure that we can control the probabilities occurring in the recursion step of the proof. (See Subsection \ref {ssec;5.4}.)



\vspace{0.2cm}

Indeed consider a temporal crossing $(X(s))_{0 \leq s \leq 2^{n}}$ of $[0, \lfloor 2^{n \beta}\rfloor]\times [0, 2^{n}]$,
and  for $k$ large (but not depending on $n$) let us consider its restriction to the time interval
$[i2^{n-k}, (i+1) 2^{n-k}]$: $X_{k,i}=(X(s)\colon i 2^{n-k} \leq s \leq (i+1) 2^{n-k})$. We wish to show that there must be crossings of smaller rectangles of similar ``scale", yielding a probability estimate in terms of $P_{n-k}$.  Thus the above result accomplishes the second step of our strategy, as outlined at the end of the introduction.

\vspace{0.2cm}

{\it Proof of Proposition \ref{xiv}.}
We begin by breaking the latter kind of event into several cases.
Take $k_{0}$ so that $2^{-k_{0} \beta} \leq \frac{1-2^{- \beta}}{10}$ and $k_{0} > 7$.  We note that $k_0$, once fixed, does not depend on $n$.
%
%
We split up the argument into three cases. For this let $v(X_{k,i})$ be as in \eqref{var}.

\vspace{0.2cm}

\noindent Case 0. $v(X_{k,i}) > (1+ \frac{ 2^{-k_0 \beta }}{4} )  \lfloor 2^{\beta (n-k)} \rfloor$.

\vspace{0.3cm}

\noindent Case 1. $ v(X_{k,i})< \lfloor 2^{(n-k) \beta} \rfloor  \left( 1- \frac{(1 - 2^{-\beta})}{10}\right)$.

\vspace{0.3cm}

\noindent Case 2.
There exist $\tau_{i} < \sigma_{i} \in [i 2^{n-k} ,(i+1) 2^{n-k}]$ with $\sigma_{i} - \tau_{i} < \frac{9}{20}2^{n-k} $ and:


$(i)\, \lfloor 2^{(n-k) \beta} \rfloor  \left( 1- \frac{(1 - 2^{-\beta})}{10}\right) \le \vert X(\sigma_{i}) - X (\tau_{i}) \vert  \leq   (1+ 2^{-k_0 \beta } ) 2^{(n-k) \beta} $;

\vspace{0.3cm}

$(ii)\, (X(s) - X(\sigma_{i})) (X(s) - X(\tau_{i})) \leq 0 \hspace{0.3cm} \text{ for all } s \in  [i 2^{n-k} ,  (i+1) 2^{n-k}]$.

\vspace{0.3cm}

\noindent Case 3. As in Case 2, but instead  $\sigma_{i} - \tau_{i} \ge \frac{9}{20}\,2^{n-k} $.

\vspace{0.3cm}

\noindent The probability of the event in Case 0 is dealt with by Lemma \ref{lemad2} with $c = (1 + 2^{-k_0\beta} / 4 ) ^{-1}$. It is bounded by a constant times $2^{k\beta }P_{n-k}$.

\vspace{0.3cm}

\noindent Case 1 implies the occurrence of the event $B_{t} (c, [0, 2^{n \beta}],n-k)$ for $t=i2^{n-k}$, $c = 1 - \frac{1-2^{-\beta}}{10}$.  Note that given the FKG property of the renewal processes (see Remark \ref{comparison}) and the fact that event $B_{t} (c, [0, 2^{n \beta}],n-k) $ is a decreasing event for the renewal points,
the probability of $B_{t} (c, [0, 2^{n \beta}],n-k) $ under $\tilde P$ is bounded from above by the probability of $B_{t'} (c, [0, 2^{n \beta}],n-k) $ under $P$, with $t' \ = \ 2^{n-k}$. By Lemma \ref{lem4} its probability is bounded by $C(c) 2^{k \beta} P_{n-k}$ for suitable finite $C(c)$.

\vspace{0.3cm}
\noindent In Case 2, since $\sigma_i - \tau_i < \frac{9}{20}2^{n-k } $, the event $A (D, c, n-k-1)$ occurs for
$D = [0, \lfloor2^{n \beta}\rfloor]\times [i 2^{n-k}, (i+1) 2^{n-k}]$ and
$1/c = \min \left( \frac{10}{9} , \frac{1}{10}+ \frac{9}{10} 2^{\beta} \right) $.  Again, as in Case 1, under the probability $\tilde P $  this probability is bounded  $P ( A (D', c, n-k-1)) $ where $D' \ = \ [0, \lfloor2^{n \beta}\rfloor]\times [ 2^{n-k}, 2 \  2^{n-k}]$.   So by Proposition \ref{adcr}, this is bounded by a multiple of $P_{n-k-1}$.

\vspace{0.3cm}

\noindent In Case 3, retaining the notation introduced in Case 2, we assume without loss of generality
that $X( \tau_i ) < X( \sigma_i )$ and define
\begin{eqnarray*}
\tau_i' \!\!&=&\!\!  \inf \{s \geq \tau_i : X(s) \geq X( \tau_i ) + \lfloor 2^{(n-k-k_0) \beta }\rfloor, X(s) \in \lfloor 2^{(n-k-k_0) \beta }\rfloor \IZ\};\\
\tau_i'' \!\!&=&\!\! \sup \{\tau_i'\leq s \leq \sigma_i : X(s) = X( \tau_i' )\};
\end{eqnarray*}
and (symmetrically)
\begin{eqnarray*}
	\sigma_i' \!\!&=&\!\!  \sup \{s \leq \sigma_i : X(s) \leq X( \sigma_i ) - \lfloor 2^{(n-k-k_0) \beta }\rfloor, X(s) \in \lfloor 2^{(n-k-k_0) \beta }\rfloor \IZ\};\\
	\sigma_i'' \!\!&=&\!\! \inf \{\tau_i\leq s \leq \sigma_i' : X(s) = X( \sigma_i' )\}.
\end{eqnarray*}


We
%
%
%
%
%
%
%
%
%
have two subcases, depending on $\sigma_i''-\tau_i''$:
\begin{enumerate}
	\item If $\sigma_i''-\tau_i''\leq\frac342^{n-k-1}$, then
letting  $D = [0, \lfloor 2^{n \beta}\rfloor]\times [ i 2^{n-k}, (i + 1) 2^{n-k}]$,
we claim that the event $H(D,r,c,k^*) $ has occurred with 
$c = 3/4, r = n-k-1$ and $k^* = k_0$. Indeed the path from $\tau''_i$ to $\sigma''_i$ ensures it,
since $|X(\tau''_i)-X(\sigma''_i)|=|X(\tau'_i)-X(\sigma'_i)|\geq |X(\sigma_i)-X(\tau_i)|-4\times2^{(n-k-k_0)\beta}\geq  \lfloor 2^{r\beta}  \rfloor $,
%
%
where we use the lower bound in Case 2 $(i)$ and the first condition on $k_0$ stipulated above and for $n$ large we have $\frac{ \lfloor 2^{(n-k) \beta} \rfloor}{ \lfloor 2^{(n-k-1) \beta} \rfloor} $
is approximately $2^{\beta} $. From Lemma~\ref{lemad1}, after suitably shifting the time domain as before, we get a $\tilde P$ probability bound of constant times $P_{n-k-1}$ for this subcase, where the constant depends on $k,k_0$ but not on $n$.	

	\item If $\sigma_i''-\tau_i''>\frac342^{n-k-1}$, then the path between $\tau''_i$ and $\sigma''_i$ implies the occurrence of
	$F (\epsilon', L, T, D)$ for the same $D$ as above, and
	$$\epsilon' = \lfloor 2^{(n-k-k_{0}) \beta}\rfloor,\quad
	T =\frac 13  2^{n-k}, \quad L=\lfloor2^{(n-k)\beta}\rfloor.$$
	From Proposition~\ref{p11}, we get a $\tilde P $  probability bound of constant times $P^{\frac1{10}}_{n-k}$ for this subcase, where again the constant depends on $k,k_0$ but not on $n$.	

\end{enumerate}

Collecting these cases together we have that one of the above four cases must occur given our crossing and that the probability of each of them has a bound of the form demanded.  The proof is complete.
%





\subsection{Spatial crossings of $\lfloor 2^{(n-k) \beta}\rfloor\times 2^{n}$ rectangles}
\label{ssec;5.3}
In this subsection we derive a bound similar to~(\ref{eq:xiv}) for spatial crossings of
$\lfloor 2^{(n-k) \beta}\rfloor\times 2^{n}$ rectangles, with $k$ a fixed number (to be chosen later).
This case allows for a more direct, simpler analysis than the one employed in the previous two subsections.


Let us fix $k\leq n$ and consider $D:=[0, \lfloor 2^{(n-k)\beta}\rfloor]\times [0, 2^{n}]$, which may be written as $\cup_{i=1}^{2^k}D_i$, with $D_i:=[0, \lfloor 2^{(n-k)\beta}\rfloor]\times [(i-1)2^{n-k}, i2^{n-k}]$.
Let now $R_i$ denote the event that there exists a  spatial crossing of $D$ starting on the left hand side of $D_i$. $R_i$  may be partitioned into  $R_i^{\mbox{\tiny $\rightarrow$}}$, $R_i^{\mbox{\tiny $\nearrow$}}$ and $R_i^{\mbox{\tiny $\uparrow$}}$, meaning that the crossing ends on the right hand side of $D_i$, $D_{i+1}$, and $D_{j}$ for some $j>i+1$, respectively. The probabilities of the first and third events are bounded above by $P_{n-k}$, since they imply a spatial crossing of $D_i$ and a temporal crossing of $D_{i+1}$, respectively.

To bound the probability of $R_i^{\mbox{\tiny $\nearrow$}}$, we partition this event as follows.
Let $D_i^-:=[0, \lfloor 2^{(n-k)\beta}\rfloor]\times [(i-1)2^{n-k}, (i-\frac12)2^{n-k}]$ and
$D_i^+:=[0, \lfloor 2^{(n-k)\beta}\rfloor]\times [(i-\frac12)2^{n-k}, i2^{n-k}]$, and similarly define
$D_{i+1}^-$ and $D_{i+1}^+$. We then partition  $R_i^{\mbox{\tiny $\nearrow$}}$ into
$R_{i,i+1}^{\mbox{\tiny $\rightarrow$}}$, $R_{i,i+1}^{\mbox{\tiny $\uparrow$}}$, $R_{i,i+1}^{\mbox{\tiny $\nearrow$}}$,
and $\tilde R_{i,i+1}^{\mbox{\tiny $\nearrow$}}$, where the crossing
starts on the left of $D_i^+$ and ends on the  right of $D_{i+1}^-$,
starts on the left of $D_i^-$ and ends on the  right of $D_{i+1}^+$,
starts on the left of $D_i^-$ and ends on the right of $D_{i+1}^-$,
starts on the left of $D_i^+$ and ends on the right of $D_{i+1}^+$,
respectively.

The probabilities of the first and second events are bounded above by $P_{n-k}$, since they imply a spatial crossing of
$D_i^+\cup D_{i+1}^-$,
and a temporal crossing of the same rectangle, respectively.

Let us now bound $P(R_{i,i+1}^{\mbox{\tiny $\nearrow$}})$. Let $\tilde R_{i,i+1}^{\mbox{\tiny $\nwarrow$}}$ denote the event that there exists a spatial crossing of $D$ starting on the {\it left} hand side of $D_i^+$ and ending on the right hand side of $D_{i+1}^+$. Since the event where there is a temporal crossing of $D_i^+\cup D_{i+1}^-$ contains
$R_{i,i+1}^{\mbox{\tiny $\nearrow$}}\cap R_{i,i+1}^{\mbox{\tiny $\nwarrow$}}$, we find, arguing similarly as in the proof of Lemma~\ref{build}, that the probability of the former event bounds from above  $P(R_{i,i+1}^{\mbox{\tiny $\nearrow$}})^2$, and thus
$$ P(R_{i,i+1}^{\mbox{\tiny $\nearrow$}})\leq P_{n-k}^{1/2}.$$

We may similarly obtain the same bound for $P(\tilde R_{i,i+1}^{\mbox{\tiny $\nearrow$}})$.

Collecting all the above bounds, we get that
\begin{equation}\label{space}
P(R)\leq C 2^k P_{n-k}^{1/2},
\end{equation}
where $R=\cup_{i=1}^{2^k}R_i$ is the event that there exists a spatial crossing of $D$ starting on its left hand side.


\subsection{Proof of Proposition \ref{propkey} --- Recursion}
\label{ssec;5.4}




We now use the previous estimates to set up a recursion for $P_n$ --- see~(\ref{recur}) below ---, which readily leads to the conclusion of our proof of Proposition~\ref{propkey}, as subsequently explained, thus fulfilling the third step of our strategy, as outlined at the end of the Introduction.

%


\noindent

Consider first the probability of a temporal crossing of space-time rectangle $ [0, \lfloor 2^{n \beta}\rfloor]\times [0, 2^{n}]$
where no point in $[0, \lfloor 2^{n \beta}\rfloor]$ has a $2^{n-k}$ long interval in its timeline between times $-2^{n-k}$ and $2^{n} +2^{n-k}$ with no renewal marks in it; we speak of a $2^{n-k}$-gap in $[-2^{n-k}, 2^{n} +2^{n-k}]$ in this context.
We can analyse the probability of a temporal crossing of $[0, \lfloor2^{n \beta}\rfloor] \times [0, 2^{n}]$ via the filtration of the
Poisson processes/renewal processes.

\noindent

More specifically we define $\mathcal{G}_{2i}$ as the $\sigma$-field generated by these processes for all 
$x \in [0, 2^{n \beta}]$ up to time $2 i 2^{n-k}$, while $\mathcal{G}_{2i +1}$ is the $\sigma$-field generated by $\mathcal{G}_{2i}$ plus random variables $V_{x}^{2i +1}= \inf \{t \geq 2i2^{n-k}$: $t$ is in $\mathcal{R}_x \}$.
We put $T_n = \inf \{2i +1 : \exists x \in [0, 2^{n \beta}] \hspace{0.3cm} V_{x}^{2i +1} \geq (2i +1) 2^{n-k} \}$. 
$T_n$ is a stopping time for this filtration and
\begin{equation}\label{bound}
P ( T_n\leq 2^{k}) \leq K 2^{-n (\alpha -1 - \beta)} \equiv K 2^{-n \epsilon_{0}},
\end{equation}
for some $K$ depending only on $k$.


For $i=1,\ldots, 2^k$, let $G_i$ denote the event that there exists a temporal crossing of the rectangle
$[0, \lfloor 2^{n \beta}\rfloor] \times [i2^{n-k}, (i+1)2^{n-k}]$, and let $J_i$ denote the event that there
is no $2^{n-k}$-gap in $[0, \lfloor 2^{n \beta}\rfloor] \times [i2^{n-k}, (i+1)2^{n-k}]$.



We then have 
\begin{eqnarray}\nonumber
& P \left(\exists \text{ a temporal crossing of }   [0, \lfloor 2^{n \beta}\rfloor] \times [0, 2^{n}]\right) &\\\label{rec}
&\leq  P ( T_n \leq 2^{k})+P(G_2)\prod_{j=2}^{2^{k-1}} P(G_{2j}|G_2,\ldots,G_{2(j-1)},J_{2j-1}).&
\end{eqnarray}

The probabilities inside the product on the right hand side of~(\ref{rec}) can be written in terms of an integral over
conditional probabilities of $G_{2j}$ given renewal histories up to the first renewal mark (in chronological order)
in each time line contained in $[0, \lfloor 2^{n \beta}\rfloor] \times [(2j-1)\,2^{n-k}, 2j\,2^{n-k}]$
--- let us denote such renewal mark at the time line of $x\in[0, \lfloor 2^{n \beta}\rfloor]$ by $(x,\tau^j_x)$ ---
and Poissonian infection histories up to time $(2j-1)\,2^{n-k}$.
Actually, that conditional probability equals
\begin{equation}\label{condprob}
P\left(G_2\left|\mbox{first renewal marks} = \{(x,\tau^j_x-(2j-1)\,2^{n-k}),\,
x\in [0, \lfloor 2^{n \beta}\rfloor]\}\right.\right).
\end{equation}
Notice that the conditioning first renewal marks belong to timelines in
$[0, \lfloor 2^{n \beta}\rfloor] \times [0, 2^{n-k}]$.
One now has that each one of these conditional probabilities satisfies the conditions of Proposition~\ref{xiv}, and so are (uniformly) bounded by the expression in~(\ref{eq:xiv}), and thus so is the integral, and clearly also $P(G_2)$. It follows that the right hand side of~(\ref{rec}) is bounded above by
\begin{eqnarray}\nonumber
&&P ( T_n \leq 2^{k})+  C (\,k)\left(P_{n-k-1}^{\frac{1}{10}} \vee P_{n-k}^{\frac{1}{10}} \right)^{2^{k-1}}\\
&\leq & P ( T_n \leq 2^{k})+  C' (\, k) \left(P_{n-k-1} \vee P_{n-k} \right)^{2},
\end{eqnarray}
if $2^{k-1} > 20$.

\begin{remark}\label{gap}	
If we had a gap in $[0, \lfloor 2^{n \beta}\rfloor] \times [(2j-1)\,2^{n-k}, 2j\,2^{n-k}]$, say in the
timeline of $x\in[0, \lfloor 2^{n \beta}\rfloor]$,
then we would know that $\{x\}\times[(2j-1)\,2^{n-k}, 2j\,2^{n-k}]$ had no renewal mark, and the corresponding conditional probability would not be a renewal probability measure with interarrival distribution $\mu$ starting at a given time, as prescribed in Definition~\ref{pr}. We would not have a bound in terms of $P_{\cdot}$.\\
\indent We note also that the alternating of $G_\cdot$ and $J_\cdot$ events in~(\ref{rec}) allows for the validity of~(\ref{condprob}), enabling the comparison to $P_{\cdot}$; on the other hand, we get the power of $2^{k-1}$ which boosts the power of $\frac1{10}$ to $2$.
\end{remark}



The estimation of the probability of a spatial crossing of a space-time rectangle
$ [0, \lfloor 2^{n \beta}\rfloor]\times [0, 2^{n}]$ is similar, if easier.
A spatial crossing of that rectangle starting from its left hand side entails $\lfloor 2^{k \beta}\rfloor$
crossings of $\lfloor 2^{(n-k) \beta}\rfloor\times 2^{n}$ rectangles starting from their respective left hand sides,
which is a collection of independent events, each of whose probabilities is bounded above by the right hand side
of~(\ref{space}), as argued in Subsection~\ref{ssec;5.3} above.
Of course, the probability of the event of a spatial crossing starting on the right hand side of
$ [0, \lfloor 2^{n \beta}\rfloor]\times [0, 2^{n}]$ satisfies the same bound.


 We thus have that if $2^{(k-1)\beta} > 4$,
 $$
 P ( \exists \mbox{ a spatial crossing of } [0, \lfloor 2^{n \beta}\rfloor] \times [0, 2^{n}] )
 \leq C(k)\,P_{n-k}^{2},
 $$
 for some $C(k)$ not depending on $n$.

Thus we can find $k$ so that for all $n$ large
\begin{equation}\label{recur}
P_n\leq P ( T_n \leq 2^{k}) + C''  \left(P_{n-k-1} \vee P_{n-k} \right)^{2},
\end{equation}
%
%
where $C''$ depends only on $k$.
Here $P_n$
represents the  supremum over renewal probabilities on $[0, 2^{n \beta}]\times [0, 2^{n}]$ as in Definition~\ref{pr}.

\medskip

To complete the proof of Proposition \ref{propkey} we note that it follows from~(\ref{bound}) that if $n$ is large, then
$P ( T_n \leq 2^{k}) \leq 2^{-n \frac{\epsilon_{0}}{2}}$.  Furthermore,
for $n_0$ an integer fixed large and $j$ a strictly positive integer, let $\mathcal{H}(j) $
be the statement
\begin{equation}\label{star}
P_r \leq 2^{-r\frac{\epsilon_{0}}{5}}\mbox{ for each }n_0 \leq r \leq n_0 +j(k+1).
\end{equation}

If $\mathcal{H}(j) $ holds, then
applying (\ref{recur}), $P_n \leq 2^{-n \frac{\epsilon_{0}}{2}} + C'' \left(P_{n-k-1} \vee P_{n-k} \right)^{2} $.  Under $\mathcal{H}(j) $ this is less than
$2^{-n \frac{\epsilon_{0}}{2}} + C'' 2^{-2(n-k-1) \frac{\epsilon_{0}}{5}}$.
If $n_0 $ was fixed sufficiently large this is
$\leq 2^{-n \frac{\epsilon_{0}}{5}}$ for $n= n_0 +j(k+1) +1,n _0 +j(k+1) +2, \dots, n_0 +(j+1)(k+1)-1 $. We can now apply this argument again for $n = n_0 +(j+1)(k+1)$ and we have established the inductive hypothesis that $\mathcal{H}(j) $ implies $\mathcal{H}(j+1) $ ;
if necessary making $n_0$ larger, we further have that
$P ( T_{n_0+i} \leq 2^{k}) \leq 2^{-(n_{0}+i) \frac{\epsilon_{0}}{2}}$ for $0 \le i \leq k$.
We now choose $\lambda_0$ so small that~(\ref{star}) holds  for $j=1$ and $\lambda \in (0, \lambda_0)$.

\vspace{0.3cm}

\noindent {\textbf {Acknowledgements:}}  L.~R.~Fontes and M.~E.~Vares thank CBPF for the hospitality in the week January 15-20, 2018. L.~R.~Fontes acknowledges support of CNPq (grant 311257/2014-3) and FAPESP (grant 2017/10555-0). M.~E.~Vares acknowledges support of CNPq (grant 305075/2016-0) and FAPERJ (grant E-26/203.048/2016).

\vspace{0.3cm}
%

%
%
%
%
%
%
%
%


\begin{thebibliography}{99}

%
%

\bibitem[CD09]{CD09}  S. Chatterjee, R. Durrett (2009). Contact process on random graphs with degree power law distribution have critical value zero.
{\it{ Ann. Probab.}} {\bf 37}, 2332--2356.



\bibitem[Du1]{Du1} R. Durrett (1995). Ten Lectures on particle systems.
(Ecole d'Et\'e de Probabilit\'es de Saint-Flour XXIII, 1993) {\it Lecture Notes in Math.}, 1608, 97--201, Springer, Berlin.


%


\bibitem[FMMV]{FMMV}  L.~R. Fontes, T.~S. Mountford, D.~H.~U. Marchetti, M.~E. Vares.  Contact process under renewals I. {\it{Stoch. Proc. Appl.}} (to appear).

\bibitem[H]{H} T.~E. Harris (1974). Contact interactions on a lattice.  {\it {Ann. Probab.}} {\bf 2} 969--988.

\bibitem[K]{K} A. Klein (1994). Extinction of Contact Processes and percolation processes in a random environment.  {\it {Ann. Probab.}} {\bf 22}, No. 3 1227-1251.


\bibitem[L]{L}  T.~M.  Liggett.  Interacting Particle Systems. {\it {Grundlehren der Mathematischen Wissenschaften}}
{\bf 276}, New York: Springer, 1985.

\bibitem[NV]{NV} C.~M. Newman, S.~B. Volchan. Persistent survival of one dimensional contact processes in random environments. {\it Ann. Probab.} {\bf 24}, 411--421.

\end{thebibliography}
\end{document}